\theoremstyle{plain}
\newtheorem{theorem}{Theorem}[section]
\newtheorem{lemma}[theorem]{Lemma}
\newtheorem{proposition}[theorem]{Proposition}
\newtheorem{corollary}[theorem]{Corollary}
\theoremstyle{definition}
\newtheorem{definition}[theorem]{Definition}
\newcommand{\newreptheorem}[2]{\newtheorem*{rep@#1}{\rep@title}\newenvironment{‌​rep#1}[1]{\def\rep@title{#2 \ref*{##1}}\begin{rep@#1}}{\end{rep@#1}}}
\numberwithin{equation}{section}
\numberwithin{figure}{section}
\newcommand{\R}{\ensuremath \mathbb{R}}
\newcommand{\Z}{\ensuremath \mathbb{Z}}
\newcommand{\N}{\ensuremath \mathbb{N}}
\newcommand{\inv}{^{-1}}
\begin{document}

\title{The Spectra of Volume and Determinant Densities of Links}
\author{Stephan D. Burton}
\thanks{Supported by NSF Grants DMS-1105843 and DMS-1404754.}

\begin{abstract}
The \textit{volume density} of a hyperbolic link $K$ is defined to be the ratio of the hyperbolic volume of $K$ to the crossing number of $K$. We show that there are sequences of non-alternating links with volume density approaching $v_8$, where $v_8$ is the volume of the ideal hyperbolic octahedron. We show that the set of volume densities is dense in $[0,v_8]$. The \textit{determinant density} of a link $K$ is $[2 \pi \log \det(K)]/c(K)$. We prove that the closure of the set of determinant densities contains the set $[0, v_8]$.
\end{abstract}

\maketitle

\section{Introduction}
Recent work in knot theory has studied the relationship between the hyperbolic volume of a knot or link and invariants of a knot or link that can be read from a diagram. See for example \cite{Guts}, \cite{FKPtwists}, \cite{FKP}, \cite{BanachSurvey}, \cite{LackenbyBound}, and references therein. Recent work of Champanerkar, Kofman, and Purcell (\cite{qdensity}, \cite{CKP}, and \cite{CKP2}) investigated a relationship between the volume of a knot, its determinant, and its crossing number. In this paper we continue to explore this relationship. We begin with the following invariant.

\begin{definition}
Given a hyperbolic link $K$, let $\text{vol}(K)$ be its hyperbolic volume, and let $c(K)$ be the crossing number of $K$. The \textit{volume density} of $K$ is defined to be $$d_{\text{vol}}(K): = \frac{\text{vol}(K)}{c(K)}$$
\end{definition}

D. Thurston \cite{DThurston} showed that $d_{\text{vol}}(K) \leq v_8$ for all knots $K$, where $v_8 \approx 3.66286$ is the volume of the regular ideal hyperbolic octahedron. This was done by decomposing $S^3 \backslash K$ into octahedra, placing one octahedron at each crossing, and pulling the remaining vertices to $\pm \infty$. Adams \cite{AdamsBound} showed that $\text{vol}(K) \leq (c(K)-5)v_8 + 4v_3$ for any link having $c(K) \geq 5$, where $v_3 \approx 1.01494$ is the volume of the ideal hyperbolic tetrahedron. Therefore $v_8$ is a strict upper bound for the volume density of any finite link.

Champanerkar, Kofman, and Purcell \cite{CKP} showed that the upper bound of $v_8$ on volume density is asymptotically sharp. More precisely, there exist sequences of links $\{K_n\}_{n = 0}^\infty$ such that $$\lim_{n \to \infty} d_{\text{vol}}(K_n) = v_8$$ (see Theorem \ref{thm:CKPthm}). Such sequences of links are called \textit{geometrically maximal}. The authors of \cite{CKP} produce examples of geometrically maximal sequences of links by constructing links approaching the \textit{infinite weave} $\mathcal{W}$, the infinite alternating link with the square lattice projection depicted in Figure \ref{fig:InfiniteWeave}. All the links in the examples constructed were alternating, and do not contain a cycle of tangles (see Definition \ref{def:Cycle}). 
The first purpose of this paper is to show that there exist geometrically maximal sequences of links having a cycle of tangles (see  Theorem \ref{thm:CycleEasy}). We then use this to prove Theorem \ref{thm:nonalt} which states that there exist geometrically maximal sequences of non-alternating links.

\begin{figure}
\def\svgwidth{250pt}
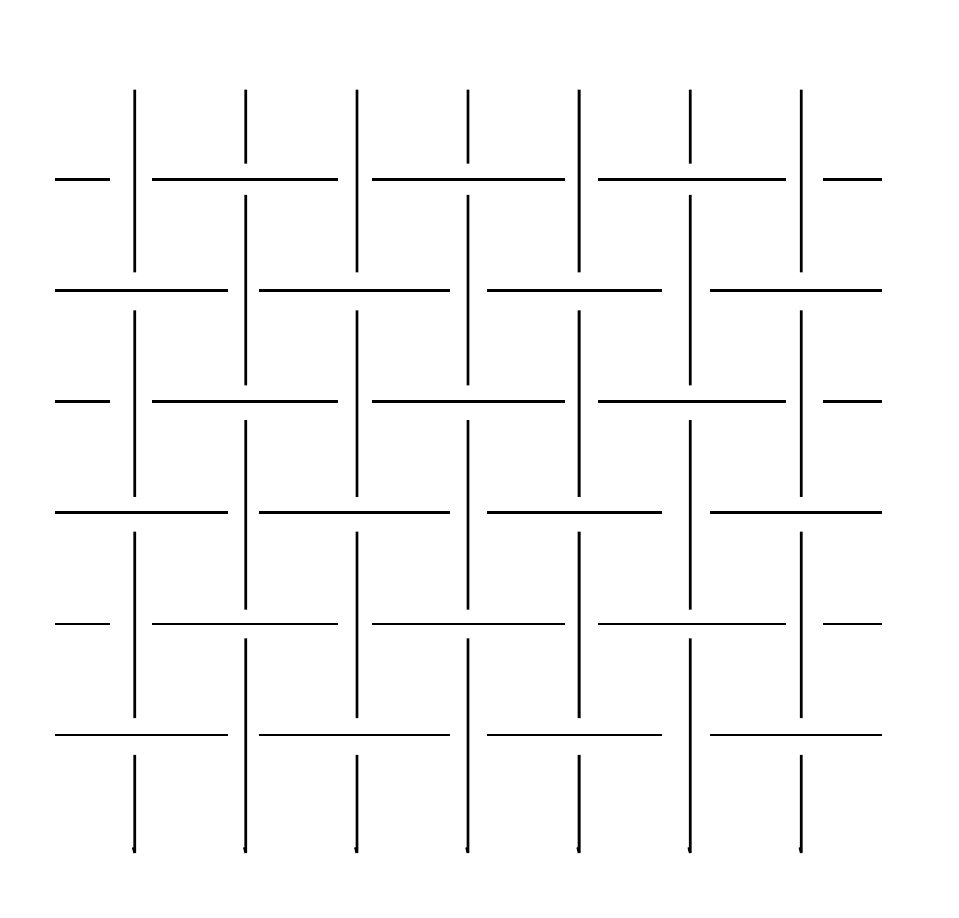
\caption{The infinite weave $\mathcal{W}$.}
\label{fig:InfiniteWeave}
\end{figure}

Let $\mathcal{C}_{\text{vol}} = \{\text{vol}(K)/c(K): K \text{ is a hyperbolic link}\} \subseteq \R$ and let $\text{Spec}_{\text{vol}}$ be the set of limit points of $\mathcal{C}_{\text{vol}}$. We call $\text{Spec}_{\text{vol}}$ the \textit{spectrum of volume densities}. A question that arose in \cite{CKP} was how one may describe the sets $\mathcal{C}_{\text{vol}}$ and $\text{Spec}_{\text{vol}}$. (See also \cite{qdensity} for discussion and related questions.) In particular, what numbers occur as volume densities? Since $v_8$ forms an upper bound on volume density, it is clear that $\mathcal{C}_{\text{vol}}$ and $\text{Spec}_{\text{vol}}$ are subsets of $[0,v_8]$. The existence of geometrically maximal knots implies that $v_8 \in \text{Spec}_{\text{vol}}$. In Lemma \ref{lem:MinimalKnot}, we describe \textit{geometrically minimal} sequences of links, i.e. those with volume density approaching 0. Hence $0 \in \text{Spec}_{\text{vol}}$. It was shown by Champanerkar, Kofman, and Purcell in \cite{CKP2} that $2v_3 \in \text{Spec}_{\text{vol}}$. We are able to prove the following theorem, which will be restated in Theorem \ref{thm:spectrum}.

\begin{theorem}\label{thm:IntroSpectrumVol}
The set $\mathcal{C}_{\emph{vol}}$ of volume densities of hyperbolic links is a dense subset of $[0,v_8]$, and $\emph{Spec}_{\emph{vol}} = [0,v_8]$. In other words, given $x \in [0,v_8]$ there exists a sequence $\{K_n\}_{n = 1}^\infty$ of hyperbolic links such that the volume densities satisfy $\displaystyle\lim_{n \to \infty} d_{\emph{vol}}(K_n) = x$.
\end{theorem}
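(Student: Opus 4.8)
The plan is to realize every $x \in [0,v_8]$ with a single tunable family of alternating links built from the weave, where hyperbolicity and crossing number are controlled by classical diagrammatic theorems and the volume is controlled by Dehn surgery. The endpoints are already in hand: $x = v_8$ is realized by the geometrically maximal sequences of Champanerkar--Kofman--Purcell (Theorem~\ref{thm:CKPthm}), and $x = 0$ by the geometrically minimal sequences of Lemma~\ref{lem:MinimalKnot}. Since $0 \le d_{\text{vol}}(K) < v_8$ for every hyperbolic link, it therefore suffices to produce, for each $x \in (0,v_8)$, a sequence of hyperbolic links whose volume densities converge to $x$; this simultaneously yields density of $\mathcal{C}_{\text{vol}}$ in $[0,v_8]$ and the equality $\text{Spec}_{\text{vol}} = [0,v_8]$.

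Fix a CKP weave chunk $W_s$, a reduced prime alternating non-torus diagram with $C_s := c(W_s)$ crossings and $d_{\text{vol}}(W_s) \to v_8$ as $s \to \infty$. Inside a single crossing of $W_s$ I would insert a twist region, letting $W_s^{(t)}$ denote the result of placing $t$ crossings there while keeping the diagram reduced and alternating. By Menasco's theorem each $W_s^{(t)}$ is hyperbolic, and by the Kauffman--Murasugi--Thistlethwaite theorem its crossing number equals the number of crossings of this reduced alternating diagram, namely $c(W_s^{(t)}) = C_s - 1 + t$, so that
$$ d_{\text{vol}}\big(W_s^{(t)}\big) = \frac{\text{vol}(W_s^{(t)})}{C_s - 1 + t}. $$
Note that connected sum, the soft way to average two densities, is unavailable here precisely because connected sums of links are never hyperbolic; the twist region is the hyperbolicity-preserving substitute.

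The twist region is realized as Dehn filling of the crossing-circle cusp of the augmented link $\widehat{W_s}$ obtained by un-twisting it. By Thurston's hyperbolic Dehn surgery theorem, $\text{vol}(W_s^{(t)}) \nearrow \text{vol}(\widehat{W_s}) =: B_s < \infty$ as $t \to \infty$, so for fixed $s$ one has $d_{\text{vol}}(W_s^{(t)}) \to 0$, while $d_{\text{vol}}(W_s^{(1)}) = \text{vol}(W_s)/C_s \to v_8$. To see that the intermediate values fill in finely, write $V_t := \text{vol}(W_s^{(t)})$ and estimate, for $t \ge 1$,
$$ \big| d_{\text{vol}}(W_s^{(t+1)}) - d_{\text{vol}}(W_s^{(t)}) \big| \le \frac{|V_{t+1} - V_t|}{C_s - 1 + t} + \frac{B_s}{(C_s - 1 + t)^2}. $$
The octahedral bound of D. Thurston applied to a diagram of $\widehat{W_s}$ gives $B_s \le v_8(C_s + O(1))$, so the second term is $O(1/C_s)$ uniformly in $t$. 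For the first term I would invoke the cusp-volume and Dehn-filling estimates of Futer--Kalfagianni--Purcell for augmented links: changing a single twist region by one crossing changes the volume by at most a universal constant, whence this term is $O(1/C_s)$ as well. Thus $\{ d_{\text{vol}}(W_s^{(t)}) : t \ge 1 \}$ is an $O(1/C_s)$-net of the interval $(0, \text{vol}(W_s)/C_s]$.

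Given $x \in (0,v_8)$ I would then diagonalize: for each $k$, choose $s$ large enough that the net spacing is below $1/k$ and $\text{vol}(W_s)/C_s > x$ (possible since this ratio tends to $v_8 > x$), and pick $t$ with $| d_{\text{vol}}(W_s^{(t)}) - x | < 1/k$, which exists since the values start above $x$, tend to $0 < x$, and move in steps smaller than $1/k$. The resulting links have volume densities converging to $x$, completing the proof. I expect the main obstacle to be the volume bookkeeping in the third paragraph: establishing the uniform bound $|V_{t+1} - V_t| \le C_1$ independent of $s$ and $t$ (so the net stays fine even at the high-density end, where both $B_s$ and the denominator are comparably large) together with $B_s \lesssim C_s$. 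This is exactly where genuine hyperbolic geometry---Thurston's surgery theorem and the FKP augmented-link estimates---must enter. A secondary, more routine point is to confirm that $W_s^{(t)}$ remains reduced, prime, and non-torus for all relevant $t$, so that Menasco and Kauffman--Murasugi--Thistlethwaite apply verbatim.
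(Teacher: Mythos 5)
Your overall strategy---diluting a single high-density weave chunk by growing one twist region and arguing that the resulting densities form a fine net of $(0,v_8)$---is genuinely different from the paper's, which instead takes Conway sums of $a\cdot n$ pretzel tangles (density near $0$) with $b\cdot n$ weaving tangles (density near $v_8$), normalized to have equal crossing numbers, so that exact additivity of volume under belted sum (Lemma~\ref{lem:AdditiveVolume}), exact additivity of crossing number under Conway sums of adequate tangles (Corollary~\ref{cor:additivity}), and the single estimate of Theorem~\ref{thm:11.524} pin the density to within $\epsilon$ of $\frac{b}{a+b}v_8$. The step you yourself flag as the main obstacle is, as written, a genuine gap. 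The Futer--Kalfagianni--Purcell Dehn-filling estimate is \emph{multiplicative}: it gives $V_t \ge \bigl(1-(2\pi/\hat{L}_t)^2\bigr)^{3/2}B_s$, where $\hat{L}_t$ is the normalized length of the filling slope on the crossing-circle cusp (and it applies only once $\hat{L}_t>2\pi$, i.e.\ once $t$ exceeds an absolute constant). Since $\hat{L}_t$ grows only linearly in $t$, this yields $B_s-V_t=O(B_s/t^2)=O(C_s/t^2)$, not a bound independent of $s$; hence the first term of your step estimate is controlled only by $O\bigl(\tfrac{C_s/t^2}{C_s}\bigr)=O(1/t^2)$, which for moderate $t$ is a constant that does not shrink as $s\to\infty$. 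Your net is therefore not $O(1/C_s)$-fine, and the discrete intermediate-value argument does not close. A universal additive bound $|V_{t+1}-V_t|\le C_1$ is a statement about volume \emph{defects} under filling and requires effective Neumann--Zagier-type estimates considerably stronger than (and historically later than) the FKP inequality you cite.

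The argument is probably repairable without that technology, but it needs a two-scale analysis you do not supply: in the regime where the step bound is only $O(1/t^2)$, the same multiplicative inequality shows $d_{\text{vol}}(W_s^{(t)})\ge \bigl(1-O(1/t^2)\bigr)B_s/(C_s+t)\ge v_8-O(1/t^2)-o_s(1)$, so all of those densities already sit within $O(1/t^2)$ of the top endpoint; for target accuracy $\delta$ one should start the net at $t^*\sim\delta^{-1/2}$, where the density is still $\ge v_8-O(\delta)$ and the steps are $\le \delta+O(1/C_s)$ thereafter, and then diagonalize in $\delta$ and $s$. You would also need the cusp-shape input (that the $1/t$ slope on the crossing circle of the singly augmented $\widehat{W_s}$ has normalized length growing linearly in $t$, uniformly in $s$) and the deferred diagrammatic checks (reduced, prime, twist-reduced, non-torus). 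None of this arises in the paper's proof, whose only geometric inputs are Adams's volume additivity, Thurston's volume decrease under Dehn filling, the Lackenby--Agol--Thurston twist-number upper bound used on the pretzel summands, and Theorem~\ref{thm:11.524}.
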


Another invariant that was studied in \cite{qdensity} and \cite{CKP} was the \textit{determinant density}, $d_{\det}(K)$ of a link $K$, which is defined by the equation $$d_{\text{det}}(K):= \dfrac{2\, \pi \,\log(\det(K))}{c(K)}$$
It is shown in \cite{CKP} that there are sequences $\{K_n\}_{n = 1}^\infty$ of links such that $\lim_{n \to \infty} d_{\det}(K_n) = v_8$ (see Theorem \ref{thm:CKPthm2} of this paper). We define $\mathcal{C}_{\det} = \{d_{\det(K)}: K \text{ is a (not necessarily hyperbolic) link}\} \subseteq \R$ and define $\text{Spec}_{\text{vol}}$ to be the set of limit points of $\mathcal{C}_{\det}$. We call $\text{Spec}_{\det}$ the \textit{spectrum of determinant densities}. 

It is interesting to study the relationships between $\text{Spec}_{\text{vol}}$ and $\text{Spec}_{\det}$. While we know that $\text{Spec}_{\text{vol}} \subseteq [0,v_8]$, it is unknown whether $v_8$ forms an upper bound for $\text{Spec}_{\det}$. However, it is a conjecture \cite[Conjecture 1.1]{CKP} that $v_8$ is an upper bound for $\text{Spec}_{\det}$. We are able to prove the following theorem which will be restated in Theorem \ref{thm:spectrum2}.

\begin{theorem}\label{thm:IntroSpectrumDet}
The spectrum of determinant densities $\emph{Spec}_{\det}$ contains $[0,v_8]$. In other words, given any $x \in [0,v_8]$ there exists a sequence $\{K_n\}_{n = 1}^\infty$ of links satisfying $\displaystyle \lim_{n \to \infty} d_{\det}(K_n) = x$. 
\end{theorem}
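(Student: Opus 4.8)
The plan is to exploit a structural advantage of determinant density over volume density: since the theorem allows non-hyperbolic links, and since $\det$ is multiplicative under connected sum, the desired examples can be built as iterated connected sums without any concern for hyperbolicity. Concretely, I would assemble each target link from two families of \emph{alternating} building blocks---one of high density and one of low density---and show that connected sum produces a crossing-number--weighted average of their densities.

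First I would fix the two families. On the high end, Theorem \ref{thm:CKPthm2} supplies a sequence $\{A_n\}$ of alternating (weave-approximant) links with $d_{\det}(A_n) \to v_8$. On the low end, let $T_j$ denote the $(2,j)$-torus link, whose reduced alternating diagram has crossing number $c(T_j)=j$ and whose Tait graph is two vertices joined by $j$ parallel edges, so $\det(T_j)=j$ (the number of spanning trees) and $d_{\det}(T_j) = 2\pi(\log j)/j \to 0$. These already witness the endpoints $v_8, 0 \in \text{Spec}_{\det}$. The central computation is that, because $\det$ is multiplicative and crossing number is additive under connected sum of alternating links, the alternating link $K = A_n^{\#p} \# T_j^{\#q}$ satisfies
$$d_{\det}(K) = \frac{2\pi\big(p\log\det(A_n) + q \log\det(T_j)\big)}{p\,c(A_n) + q\,c(T_j)} = t\, d_{\det}(A_n) + (1-t)\, d_{\det}(T_j), \qquad t = \frac{p\,c(A_n)}{p\,c(A_n)+q\,c(T_j)}.$$
Thus $d_{\det}(K)$ is an honest convex combination of the two block densities, with weight $t$ governed by the integers $p,q$.

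To realize a target $x \in (0,v_8)$ I would run a diagonal argument. For each $k$, choose $n_k, j_k$ large enough that $d_{\det}(A_{n_k}) > x > d_{\det}(T_{j_k})$, set the ideal weight $t_k = (x - d_{\det}(T_{j_k}))/(d_{\det}(A_{n_k}) - d_{\det}(T_{j_k}))$, which tends to $x/v_8 \in (0,1)$, and pick integers $p_k,q_k$ so that $p_k/q_k$ approximates $\tfrac{t_k}{1-t_k}\cdot\tfrac{c(T_{j_k})}{c(A_{n_k})}$ closely enough that the resulting density lies within $1/k$ of $x$. This is possible because $t_k$ stays bounded away from $0$ and $1$ and $p_k, q_k$ may be taken arbitrarily large, so the rational approximation error can be driven to zero. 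Then $d_{\det}\big(A_{n_k}^{\#p_k}\#T_{j_k}^{\#q_k}\big) \to x$, and together with the two endpoint families this gives $[0,v_8] \subseteq \text{Spec}_{\det}$.

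I expect the only delicate point to be the \emph{exact} additivity of the crossing number under connected sum, which is what upgrades the weighted-average identity from an inequality to an equality; keeping every building block alternating is precisely what guarantees this, via the Tait/Kauffman--Murasugi--Thistlethwaite results on alternating links. This requirement is also the reason the argument cannot be transplanted verbatim to volume density, where connected sum introduces an essential sphere and so destroys hyperbolicity---the obstruction the earlier part of the paper circumvents with cycles of tangles.
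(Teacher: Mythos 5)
Your argument is correct, and it reaches the theorem by a genuinely different (and somewhat more elementary) route than the paper's. The paper also interpolates between a diagrammatically maximal family (denominator closures of the weaving tangles $W_{k,k}$) and a diagrammatically minimal one (closures of the pretzel tangles $P_{3,m,3}$), but it combines the blocks by Conway sum of tangles followed by the denominator closure: multiplicativity of the determinant then comes from Kauffman's product formula for the Conway polynomial under tangle sum (Lemma \ref{lem:MultiplicativeDeterminant}), and additivity of crossing number from adequacy of tangle sums (Proposition \ref{prop:adequate} and Corollary \ref{cor:additivity}). You instead use connected sum, where $\det$ is multiplicative for any choice of summing components (e.g.\ because the double branched cover of a connected sum is the connected sum of the double branched covers) and crossing number is additive for alternating summands by the Kauffman--Murasugi--Thistlethwaite theorem; this bypasses the tangle formalism entirely and, as you observe, is available precisely because $\mathcal{C}_{\det}$ admits non-hyperbolic links. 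A second difference is how the weight is tuned: the paper chooses the pretzel parameter $m$ so that $c(K_1)=c(K_2)$ exactly, which makes the density of the sum of $a$ copies of $T_1$ and $b$ copies of $T_2$ exactly the convex combination with weights $a/(a+b)$ and $b/(a+b)$, whereas you leave the block crossing numbers unmatched and approximate the required weight by the multiplicities $p,q$; this works because $t \mapsto t\,d_{\det}(A_n) + (1-t)\,d_{\det}(T_j)$ is affine with bounded coefficients and $t$ ranges over a dense subset of $(0,1)$ as $p/q$ varies. Both devices are sound. The only points worth making explicit in a full write-up are (i) that an iterated connected sum of reduced alternating diagrams can be arranged to be again a reduced alternating diagram, so Tait crossing-number additivity genuinely applies to $A_{n}^{\#p}\#T_{j}^{\#q}$, and (ii) that although connected sum of links depends on the choice of components joined, every choice yields the same determinant and crossing number, so the ambiguity is harmless.
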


In \cite{qdensity} and \cite{CKP} it is shown that $0, v_8 \in \text{Spec}_{\text{vol}} \cap \text{Spec}_{\det}$, and the authors ask what this intersection is. As a corollary of Theorems \ref{thm:IntroSpectrumVol} and \ref{thm:IntroSpectrumDet} we obtain:

\begin{corollary}
The intersection $\emph{Spec}_{\emph{vol}} \cap \emph{Spec}_{\det}$ is equal to $[0, v_8]$.
\end{corollary}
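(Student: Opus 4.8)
The plan is to combine the two main theorems directly, since the corollary is a purely set-theoretic consequence of them. Theorem \ref{thm:IntroSpectrumVol} identifies the spectrum of volume densities exactly, giving $\text{Spec}_{\text{vol}} = [0, v_8]$, while Theorem \ref{thm:IntroSpectrumDet} supplies the containment $[0, v_8] \subseteq \text{Spec}_{\det}$. First I would substitute the equality from Theorem \ref{thm:IntroSpectrumVol} into the intersection to obtain
\[
\text{Spec}_{\text{vol}} \cap \text{Spec}_{\det} = [0, v_8] \cap \text{Spec}_{\det}.
\]

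The key observation is then the elementary fact that intersecting a set with a superset of itself returns the set unchanged: since Theorem \ref{thm:IntroSpectrumDet} gives $[0, v_8] \subseteq \text{Spec}_{\det}$, every point of $[0, v_8]$ already lies in $\text{Spec}_{\det}$, so $[0, v_8] \cap \text{Spec}_{\det} = [0, v_8]$. Combining this with the previous display yields $\text{Spec}_{\text{vol}} \cap \text{Spec}_{\det} = [0, v_8]$, as claimed. Equivalently, one can argue by double inclusion: for $\subseteq$ use $\text{Spec}_{\text{vol}} \cap \text{Spec}_{\det} \subseteq \text{Spec}_{\text{vol}} = [0, v_8]$, and for $\supseteq$ combine $[0, v_8] \subseteq \text{Spec}_{\text{vol}}$ with $[0, v_8] \subseteq \text{Spec}_{\det}$ to get $[0, v_8] \subseteq \text{Spec}_{\text{vol}} \cap \text{Spec}_{\det}$.

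There is essentially no obstacle in this step; all of the mathematical content is carried by the two theorems being cited, and the corollary is just their logical intersection. The only point worth flagging is that the argument relies on the full strength of Theorem \ref{thm:IntroSpectrumVol}, namely the exact equality $\text{Spec}_{\text{vol}} = [0, v_8]$ rather than merely the inclusion $\text{Spec}_{\text{vol}} \subseteq [0, v_8]$; without the reverse containment $[0, v_8] \subseteq \text{Spec}_{\text{vol}}$ one could only conclude that the intersection is contained in $[0, v_8]$, not that it equals $[0, v_8]$.
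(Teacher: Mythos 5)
Your proposal is correct and matches the paper, which states the corollary as an immediate consequence of Theorems \ref{thm:IntroSpectrumVol} and \ref{thm:IntroSpectrumDet} without further argument; the set-theoretic reasoning you spell out (substituting $\text{Spec}_{\text{vol}} = [0,v_8]$ and using $[0,v_8] \subseteq \text{Spec}_{\det}$) is exactly what is intended. Your remark that the full equality from Theorem \ref{thm:IntroSpectrumVol}, and not merely the upper-bound inclusion, is needed is also accurate.
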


\noindent \textbf{Acknowledgements.} The author would like to thank his adviser Efstratia Kalfagianni for help in preparing the paper and for getting him started on this project. The author would also like to thank Jessica Purcell for conversations that were helpful, especially with the proof Theorem \ref{thm:IntroSpectrumDet}. Conversations with Ilya Kofman and Abhijit Champanerkar were also beneficial to the author.

\section{Background}

\subsection{Geometrically Maximal and Diagrammatically Maximal Links}
In this subsection we recall definitions and results from \cite{CKP}. Given a link $K \subset \R^3 \cup \{\infty\}$ we define the projection graph $G(K)$ to be the projection of $K$ onto the subset $(\R^2 \times \{0\}) \cup \{\infty\}$. This resulsts in a 4-valent graph in $\R^2 \cup \{\infty\} = S^2$.

\begin{definition}
Let $G$ be a possibly infinite graph. For any finite subgraph $H$, let $\partial H$ be the set of vertices of $H$ that share an edge with a vertex not in $H$. Let $| \cdot |$ denote the number of vertices in a finite graph. An exhaustive nested sequence of finite, connected subgraphs $\{H_n \subseteq G: H_n \subseteq H_{n+1}, \bigcup_{n = 0}^\infty H_n = G\}$ is a \textit{F\o lner sequence} for $G$ if 
$$ \lim_{n \to \infty} \dfrac{|\partial H_n|}{|H_n|} = 0.$$
The graph $G$ is \textit{amenable} if a F\o lner sequence for G exists. In particular, the infinite square lattice (i.e. the projection graph of $\mathcal{W}$) is amenable.
\end{definition}

Given a link $K$ in $S^3$, a \textit{Conway sphere} is a 2-sphere $S^2$ in $S^3$ intersecting $K$ transversely in four points. Using Conway spheres, we will define a cycle of tangles, which is an important definition for the statement of Theorem \ref{thm:CKPthm}.

\begin{definition}\label{def:Cycle}
A Conway sphere is called \textit{visible} if it is parallel to one dividing the diagram into two tangles, as in Figure \ref{fig:VisibleConwaySphere}. A tangle is called \textit{knotty} if it is nontrivial, and not a (portion of a) single twist region; i.e. not a rational tangle of type $n$ or $1/n$ for $n \in \Z$. We will say that $K_n$ contains a \textit{cycle of tangles} if $K_n$ contains a visible Conway sphere with a knotty tangle on each side.
\end{definition}

\begin{figure}
\def\svgwidth{250pt}
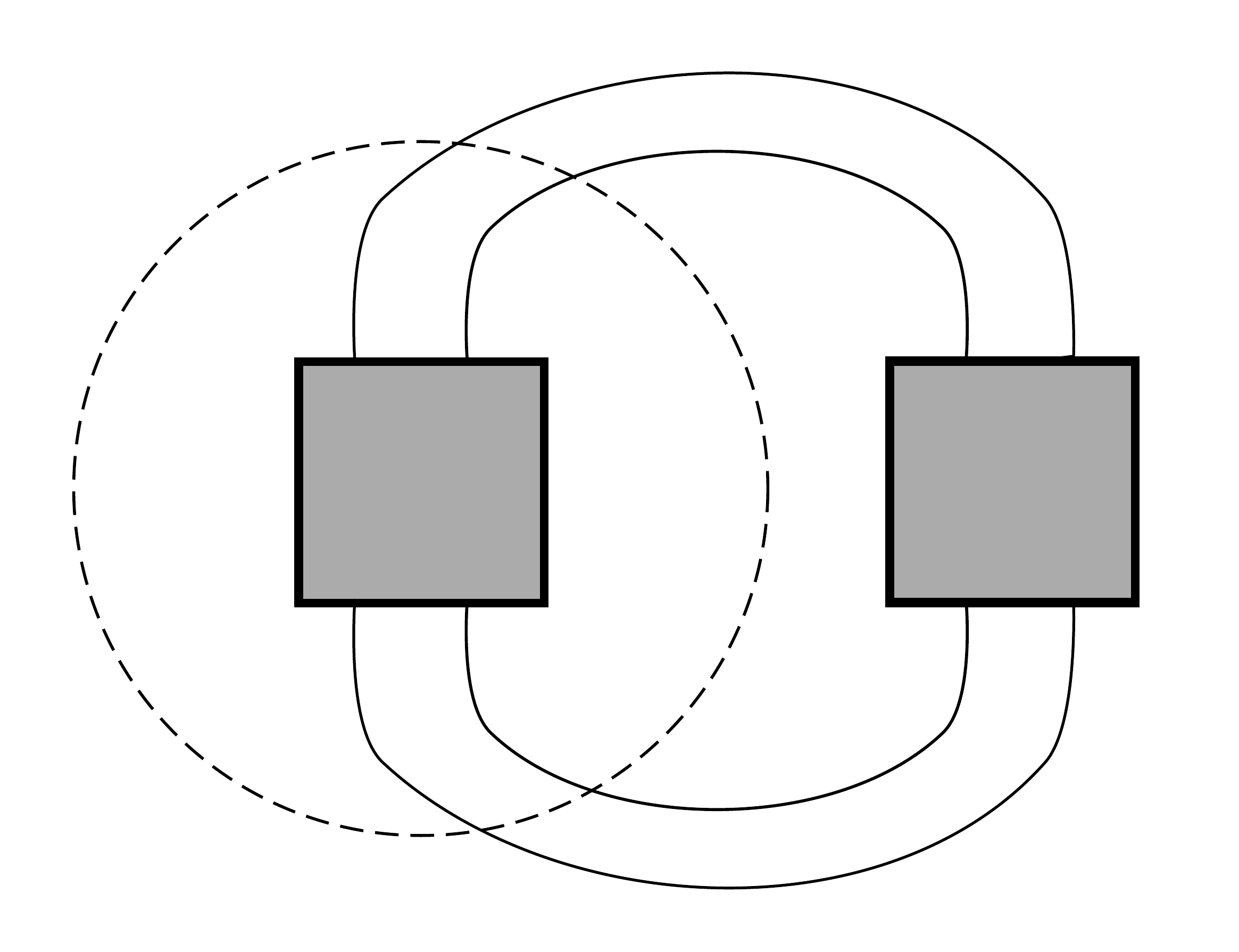
\caption{The dashed line represents a visible Conway sphere.}
\label{fig:VisibleConwaySphere}
\end{figure}

The following theorem was proved by Champanerkar, Kofman, and Purcell \cite[Theorem 1.4]{CKP}.

\begin{theorem}\label{thm:CKPthm}
Let $\{K_n\}_{n = 0}^\infty$ be a sequence of links with prime, alternating, twist-reduced diagrams that contain no cycle of tangles, such that 
\begin{enumerate}
\item there are subgraphs $G_n \subseteq G(K_n)$ that form a F\o lner sequence for $G(\mathcal{W})$, and
\item $\displaystyle \lim_{n \to \infty} \dfrac{|G_n|}{c(K_n)} = 1$.
\end{enumerate}
Then $\{K_n\}_{n = 0}^\infty$ is geometrically maximal.
\end{theorem}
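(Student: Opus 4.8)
The plan is to sandwich the quantity $\text{vol}(K_n)/c(K_n)$ between a universal upper bound of $v_8$ and a lower bound that tends to $v_8$ along the sequence. The upper bound is immediate from D. Thurston's octahedral decomposition already recalled in the introduction: placing one octahedron at each crossing and pulling the remaining vertices to $\pm\infty$ realizes $S^3 \setminus K_n$ as glued from at most $c(K_n)$ ideal octahedra, so its hyperbolic volume is at most the total straightened volume of these pieces, which is maximized by the regular ideal octahedron. Hence $\text{vol}(K_n) \le v_8\, c(K_n)$ and $\limsup_n \text{vol}(K_n)/c(K_n) \le v_8$. All of the real work is in the matching lower bound.

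For the lower bound I would use guts estimates for alternating link complements. Since each $K_n$ is prime and alternating, its two checkerboard surfaces are essential, and cutting $S^3 \setminus K_n$ along one of them yields a manifold whose guts can be read off the diagram. The Agol--Storm--Thurston lower bound on the volume of a hyperbolic Haken manifold in terms of the guts of an essential surface then gives an estimate of the form $\text{vol}(K_n) \ge v_8 \cdot g(D_n)$, where $g(D_n)$ is a combinatorial measure of the guts. The diagrammatic hypotheses are exactly what forces $g(D_n)$ to be large: twist-reducedness removes the essential annuli coming from strings of bigons, and the absence of a cycle of tangles removes the essential annuli associated to a visible Conway sphere with knotty tangles on both sides; each such annulus would otherwise be absorbed into the characteristic ($I$-bundle and Seifert-fibered) part and discarded from the guts.

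The square-lattice data enter through the subgraphs $G_n$. In the infinite weave $\mathcal{W}$ the complement decomposes into regular ideal octahedra, one per crossing, so there the octahedral and guts counts agree and contribute exactly $v_8$ per crossing; consequently a crossing lying in the interior of the weave-like region $G_n$ contributes a full $v_8$ to the guts, and only crossings near $\partial G_n$ can contribute less. I would therefore estimate $g(D_n) \ge |G_n| - C\,|\partial G_n|$ for a universal constant $C$. The F\o lner hypothesis $|\partial G_n|/|G_n| \to 0$ renders the correction negligible, and the normalization $|G_n|/c(K_n) \to 1$ lets the interior of $G_n$ dominate the entire crossing number. Dividing by $c(K_n)$ yields
$$\frac{\text{vol}(K_n)}{c(K_n)} \ge v_8\,\frac{|G_n| - C\,|\partial G_n|}{c(K_n)} = v_8\,\frac{|G_n|}{c(K_n)}\Bigl(1 - C\,\frac{|\partial G_n|}{|G_n|}\Bigr) \longrightarrow v_8,$$
so $\liminf_n \text{vol}(K_n)/c(K_n) \ge v_8$; with the upper bound this forces $\lim_n d_{\text{vol}}(K_n) = v_8$.

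The main obstacle is the lower bound, and inside it the honest accounting of the guts: one must show that the combinatorial measure $g(D_n)$ really grows like $v_8$ per interior crossing, rather than merely proportionally to the twist number with a smaller constant as a direct appeal to Lackenby's bounds would give, and that the deficit is controlled by $|\partial G_n|$ alone. This is precisely where the rigid geometry of the regular ideal octahedron tiling of $S^3 \setminus \mathcal{W}$ must be exploited, and where one has to verify carefully that the stated diagram conditions push the checkerboard surfaces and their complementary regions into the guts rather than into the discardable characteristic submanifold.
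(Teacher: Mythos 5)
First, a point of orientation: the paper does not prove this statement at all --- it is quoted from Champanerkar--Kofman--Purcell \cite{CKP} (their Theorem 1.4) and used here purely as a black box, so there is no internal proof to compare yours against. Your upper bound is fine (it is exactly the D.~Thurston/Adams octahedral bound already recalled in the introduction). The real issue is the lower bound, and the gap there is not the ``honest accounting'' you flag at the end but a quantitative obstruction that defeats the proposed tool from the outset. The Agol--Storm--Thurston inequality applied to an embedded essential surface $S$ in $M = S^3\setminus K_n$ gives $\mathrm{vol}(M) \ge v_8\cdot\chi_-(\mathrm{guts}(M\backslash\backslash S))$, and $\chi_-(\mathrm{guts})$ is bounded above by $-\chi(M\backslash\backslash S) = -\chi(S)$. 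For a checkerboard surface of a $c$-crossing alternating diagram whose faces are essentially all squares (exactly the weave-like situation your hypotheses force), $-\chi(S) = c - (\text{number of faces of one colour}) \approx c/2$. So no matter how favourably the characteristic submanifold is accounted for, a single checkerboard surface can certify at most roughly $v_8\, c/2$, i.e.\ a volume density of $v_8/2$. Your central claim that a crossing in the interior of $G_n$ ``contributes a full $v_8$ to the guts'' is therefore impossible: there is not enough Euler characteristic in the surface to carry it. Nor can the two checkerboard surfaces be fed into one application of the inequality, since they intersect at every crossing and their union is not embedded; taking the better of the two separate bounds still caps you at $v_8/2$ asymptotically. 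This is why the best guts-based bounds for alternating links in the literature have the shape $\tfrac{v_8}{2}(\mathrm{tw}(D)-2)$.

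This factor of two is precisely why the argument in \cite{CKP} does not run through guts. Their proof exploits the right-angled geometry of the weave: $\mathbb{R}^3\setminus\mathcal{W}$ decomposes into regular ideal octahedra, one per crossing, and the checkerboard (Menasco) polyhedra of the finite diagrams are shown, via a rigidity/convergence argument for the associated disk patterns over the F\o lner subgraphs $G_n$, to approach the right-angled polyhedra of the weave; this is what delivers a genuine $v_8$ per interior crossing with an error controlled by $|\partial G_n|$, after which the normalization $|G_n|/c(K_n)\to 1$ closes the argument exactly as in your final display. To salvage a guts-style proof you would need a surface, or an embedded system of surfaces, with total guts Euler characteristic asymptotic to $c(K_n)$ rather than $c(K_n)/2$, and no such surface is available in this setting.
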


The following theorem, proved in \cite[Theorem 1.5]{CKP}, gives a similar statement for diagrammatically maximal links.

\begin{theorem}\label{thm:CKPthm2}
Let $\{K_n\}_{n = 0}^\infty$ be a sequence of alternating link diagrams such that 
\begin{enumerate}
\item there are subgraphs $G_n \subseteq G(K_n)$ that form a F\o lner sequence for $G(\mathcal{W})$, and
\item $\displaystyle \lim_{n \to \infty} \dfrac{|G_n|}{c(K_n)} = 1$.
\end{enumerate}
Then $\{K_n\}_{n = 0}^\infty$ is diagrammatically maximal.
\end{theorem}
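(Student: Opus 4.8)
The plan is to read the determinant density off the diagram combinatorics by way of spanning trees. First I would use the classical fact that for a connected alternating diagram the determinant equals the number of spanning trees of either Tait (checkerboard) graph: writing $\tau(\Gamma)$ for the number of spanning trees of a graph $\Gamma$ and letting $T_n$ be a Tait graph of $K_n$, one has $\det(K_n)=\tau(T_n)$, since a checkerboard Goeritz matrix of an alternating diagram is (a reduction of) the graph Laplacian of $T_n$ and the Matrix--Tree theorem identifies its principal minor with $\tau(T_n)$. The crucial bookkeeping point is that each crossing contributes exactly one edge to $T_n$, so $|E(T_n)|=c(K_n)$ on the nose. Thus it suffices to control the growth rate of $\log\tau(T_n)$ per edge.

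Next I would invoke the spanning-tree entropy of the square lattice. Because $G(\mathcal{W})$ is the self-dual square lattice \Z$^2$, its Tait graph is again \Z$^2$, whose per-vertex tree entropy is $\lim \log\tau/|V|=4\beta/\pi$, where $\beta$ denotes Catalan's constant (the Fourier/Laplacian-determinant computation of Burton--Pemantle and Lyons). Since \Z$^2$ has exactly two edges per vertex, the per-edge entropy is $2\beta/\pi$. Hypotheses (1) and (2) are exactly what is needed to transport this bulk value to the genuine graphs $T_n$: condition (2) says almost every crossing, hence almost every edge of $T_n$, lies in the lattice region $G_n$, while condition (1) (the F\o lner property) says the boundary $\partial G_n$ is a vanishing fraction of $G_n$. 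Together these give Benjamini--Schramm convergence of $T_n$ to \Z$^2$, and amenability is precisely the setting in which the tree entropy passes to the limit, so that $\log\tau(T_n)/|E(T_n)|\to 2\beta/\pi$.

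Combining these and using $|E(T_n)|=c(K_n)$, I would conclude
\[
d_{\det}(K_n)=\frac{2\pi\log\det(K_n)}{c(K_n)}=2\pi\cdot\frac{\log\tau(T_n)}{|E(T_n)|}\longrightarrow 2\pi\cdot\frac{2\beta}{\pi}=4\beta=v_8,
\]
where the final equality is the identity $v_8=8\Lambda(\pi/4)=4\beta$ relating the octahedral volume to Catalan's constant. This exhibits $\{K_n\}$ as diagrammatically maximal.

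The main obstacle I anticipate is the passage in the second paragraph from the clean bulk entropy of \Z$^2$ to the actual graphs $T_n$. These are not exact pieces of \Z$^2$: near $\partial G_n$ and at the $o(c(K_n))$ crossings outside $G_n$ the local structure and the vertex degrees can differ from the lattice, and a priori the degrees need not be uniformly bounded, since a large complementary region of the diagram produces a high-degree Tait vertex. I would control this by bounding $\tau(T_n)$ from above and below---by deletion/contraction, or by interlacing of Laplacian eigenvalues between $T_n$ and its lattice subgraph---so that the defect and boundary edges contribute only $o(c(K_n))$ to $\log\tau(T_n)$: the F\o lner hypothesis forces the boundary contribution to be sublinear, and condition (2) does the same for the non-lattice crossings. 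Verifying that these error terms are genuinely $o(c(K_n))$, uniformly enough to vanish in the limit, is the technical heart of the argument.
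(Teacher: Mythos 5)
You should first be aware that the paper contains no proof of this statement: it is imported verbatim as \cite[Theorem 1.5]{CKP}, so there is no internal argument to compare against. That said, your proposal is essentially a reconstruction of the proof given in that source: the chain $\det(K_n)=\tau(T_n)$ for connected alternating diagrams, one Tait edge per crossing, the spanning-tree entropy $4\beta/\pi$ per vertex (hence $2\beta/\pi$ per edge) of the self-dual square lattice, and the identity $v_8=8\Lambda(\pi/4)=4\beta$ are exactly the ingredients used there, and your use of hypotheses (1) and (2) to localize the count to the lattice bulk is the intended mechanism. The one place where your write-up is a sketch rather than a proof is the transfer step you yourself flag: the lower bound $\liminf d_{\det}\geq v_8$ is the easy direction (a connected subgraph of a connected graph has no more spanning trees, so $\tau(T_n)\geq\tau(H_n)$ for the lattice piece $H_n$, and the F\o lner exhaustion gives the entropy of $H_n$), whereas the upper bound genuinely requires controlling the $o(c(K_n))$ non-lattice edges and the possibly unbounded Tait degrees; this is precisely what the cited spanning-tree-entropy machinery (Lyons' theorem for F\o lner/Benjamini--Schramm convergent sequences of bounded \emph{average} degree, or an explicit deletion--contraction estimate of the kind you mention, using $\tau(G/\{u,v\})/\tau(G)=1/R_{\mathrm{eff}}(u,v)\leq\min(\deg u,\deg v)$ together with concavity of $\log$ to keep the total correction at $o(c(K_n))$) is designed to deliver. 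So the approach is correct and matches the source; only that quantitative bookkeeping remains to be written out.
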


Note that the cycle of tangles condition is not necessary in the construction of \textit{diagrammatically} maximal links. This is one reason motivating Theorem \ref{thm:CycleEasy}, which states that there are \textit{geometrically} maximal sequences of links containing a cycle of tangles.

\subsection{Conway Sums and Belted Sums of Links}
Let $T$ be a tangle. There are two ways to close the tangle $T$ to form a link. Denote these closures $N(T)$ and $D(T)$ as depicted in Figure \ref{fig:TangleClosures}. Next, one may add an extra component, $C$, called the \textit{belt component}, to the link $N(T)$ in the following manner. Let $C$ be an unknotted circle that lies in a plane orthogonal to the projection plane and encircles the arcs added to $T$ to form $N(T)$, as in Figure \ref{fig:TangleClosures}. The resulting link, $B(T)$, is the \textit{belted link corresponding to $T$}.

\begin{figure}
\def\svgwidth{250pt}
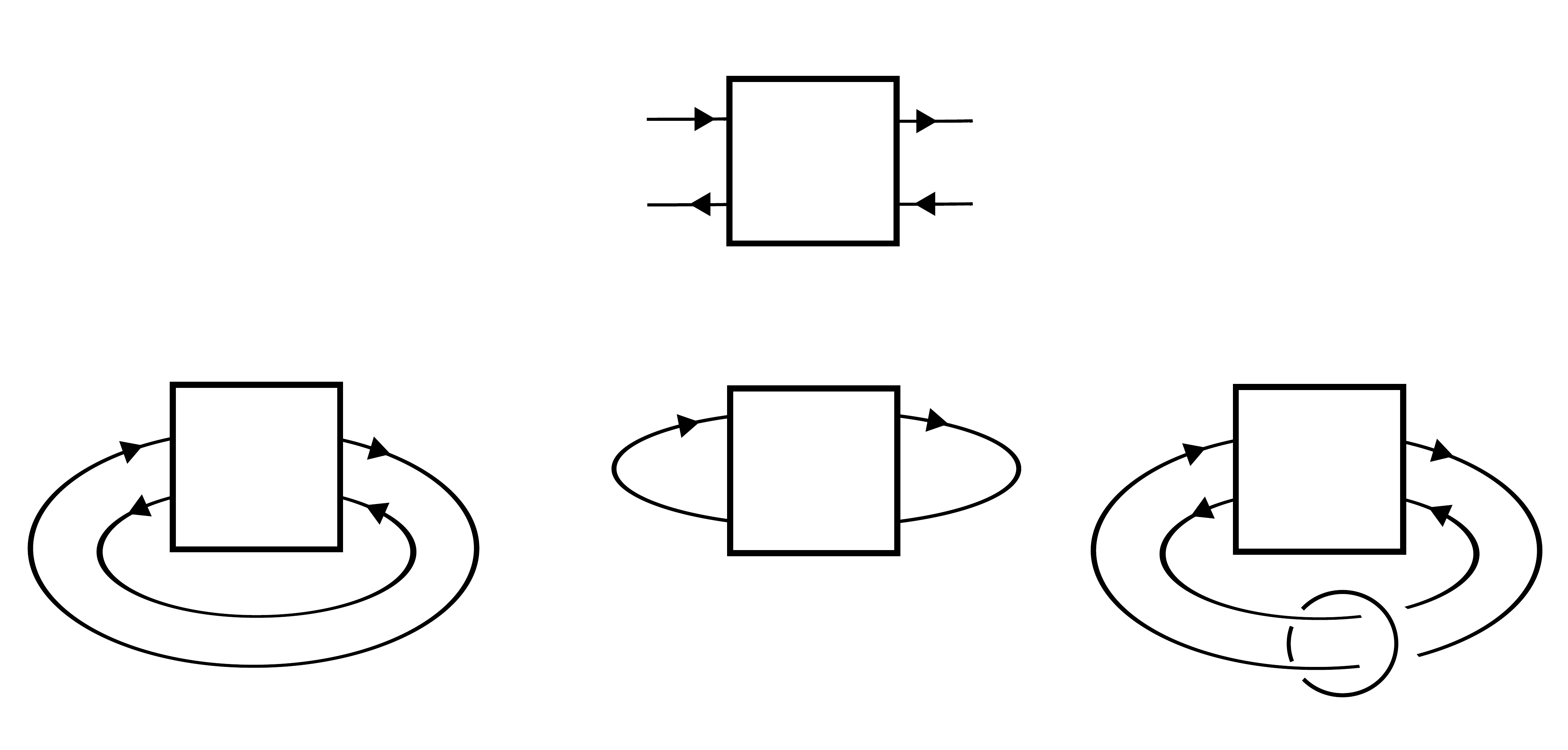
\caption{A tangle $T$, its two closures $N(T)$ and $D(T)$, and its corresponding belted tangle $B(T)$.}
\label{fig:TangleClosures}
\end{figure}

Given two tangles $T_1$ and $T_2$ such that $B(T_1)$ and $B(T_2)$ are hyperbolic, one may form the \textit{belted sum} of $B(T_1)$ and $B(T_2)$ as follows. Let $M_i$ be the link complement $S^3 \backslash B(T_i)$, for $i = 1, 2$. Work of Adams \cite{Adams} shows that the belt component $C_i$ of $T_i$ bounds a totally geodesic, twice-punctured disk in $M_i$. Cut each $M_i$ along this twice-punctured disk to form a manifold $M_i'$. Since there is a unique hyperbolic structure on the twice-punctured disk, we may glue $M_1'$ to $M_2'$ via an isometry of twice-punctured disks that maps $C_1$ to $C_2$. The result is a link, denoted $B(T_1) + B(T_2)$, which we call the belted sum of $B(T_1)$ and $B(T_2)$. See Figure \ref{fig:BeltedSum}. 

The following theorem about belted sums of links was proved by Adams \cite[Corollary 5.2]{Adams}. It will be important in the proofs of Theorems \ref{thm:IntroSpectrumVol}, \ref{thm:CycleEasy}, \ref{thm:nonalt}.

\begin{lemma}\label{lem:AdditiveVolume}
Suppose that $n$ belted tangles $B(T_1), \hdots B(T_n)$ are hyperbolic with finite volume. Then the belted sum $B(T_1) + \hdots + B(T_n)$ is also a hyperbolic link and has volume $\emph{vol}(B(T_1)) + \hdots + \emph{vol}(B(T_n))$.
\end{lemma}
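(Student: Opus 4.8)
The plan is to prove the additivity of volume under belted sums by a standard cut-and-paste argument in hyperbolic geometry, leveraging the totally geodesic twice-punctured disks guaranteed by Adams' earlier work. The key structural fact I would rely on is that each belt component $C_i$ bounds a totally geodesic, twice-punctured disk $D_i$ in $M_i = S^3 \backslash B(T_i)$, and that the hyperbolic structure on a thrice-punctured sphere (equivalently, the twice-punctured disk here) is unique and rigid. This rigidity is what makes the gluing canonical: any two such totally geodesic twice-punctured disks are isometric, so the identification used to form the belted sum is an isometry of the cut boundaries.

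First I would set up the cut manifolds $M_i' = M_i \backslash D_i$, observing that cutting along a totally geodesic surface produces a hyperbolic manifold with geodesic boundary (or, after accounting for the cusps, a manifold whose completion has the appropriate geodesic boundary pieces). The crucial point is that because $D_i$ is totally geodesic, the metric on $M_i'$ near the cut locus is simply the metric on $M_i$ restricted, with a mirror boundary that is itself totally geodesic. Second, I would glue $M_1', \dots, M_n'$ along these isometric totally geodesic boundary surfaces; by the Maskit-type combination theorem (or equivalently by noting that two hyperbolic manifolds glued along isometric totally geodesic boundary yield a smooth hyperbolic structure, since the developing maps match up to a single isometry across the totally geodesic interface), the result $B(T_1) + \dots + B(T_n)$ carries a complete finite-volume hyperbolic structure. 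This establishes hyperbolicity.

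For the volume count, I would argue that volume is additive under this cut-and-paste because the gluing surfaces are $2$-dimensional and hence measure zero. Cutting $M_i$ along $D_i$ does not change its volume, so $\mathrm{vol}(M_i') = \mathrm{vol}(M_i) = \mathrm{vol}(B(T_i))$. Since the glued manifold is the disjoint union of the $M_i'$ identified along the measure-zero surfaces, its total volume is the sum $\sum_{i=1}^n \mathrm{vol}(M_i')$, giving $\mathrm{vol}(B(T_1) + \dots + B(T_n)) = \sum_{i=1}^n \mathrm{vol}(B(T_i))$. I would finish by invoking Mostow--Prasad rigidity, which guarantees that this volume is a genuine topological invariant of the resulting link complement and does not depend on the choices made in the construction.

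The main obstacle, and the step requiring the most care, is verifying that the glued-up structure is genuinely a complete hyperbolic structure rather than merely a piecewise-hyperbolic one with a possible singularity along the gluing locus. The totally geodesic hypothesis is exactly what rules out a cone-angle or corner singularity: since both boundary surfaces are totally geodesic and the gluing is by an isometry, the second fundamental forms match (both vanish), so the metric extends smoothly across the interface. I would cite Adams' result that $C_i$ bounds such a totally geodesic twice-punctured disk as the input that makes this work, and note that the uniqueness of the hyperbolic structure on the twice-punctured disk is what ensures the gluing isometry exists in the first place. Since this lemma is attributed to Adams' Corollary 5.2, the expectation is that the argument is essentially the one above, with the rigidity of the thrice-punctured sphere doing the heavy lifting.
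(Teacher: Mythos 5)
Your argument is correct and is essentially the one the paper relies on: the paper gives no independent proof, citing Adams' Corollary 5.2, and the cut-and-paste construction it describes just before the lemma (cutting along the totally geodesic twice-punctured disks, regluing by an isometry guaranteed by the rigidity of the thrice-punctured sphere, and noting the gluing locus has measure zero) is exactly your argument. Your additional care about the vanishing second fundamental forms ruling out a singularity along the interface is the right point to emphasize and matches the standard proof.
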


\begin{figure}
\def\svgwidth{250pt}
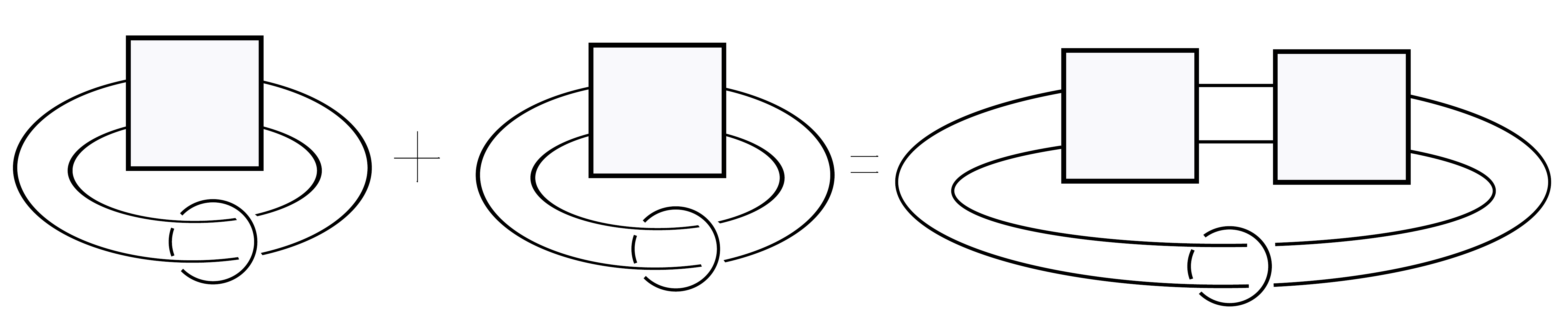
\caption{The belted sum of two tangles.}
\label{fig:BeltedSum}
\end{figure} 

In addition to the belted sum of tangles, we can sum tangles in the manner described below. 
\begin{definition}
Let $T_1$ and $T_2$ be tangle diagrams. One may connect these tangle diagrams as indicated in Figure \ref{fig:ConwaySum} to form the tangle $T_1 + T_2$. We call $T_1 + T_2$ the \textit{tangle sum}, or \textit{Conway sum} of $T_1$ and $T_2$.
\end{definition}

\begin{figure}
\def\svgwidth{250pt}
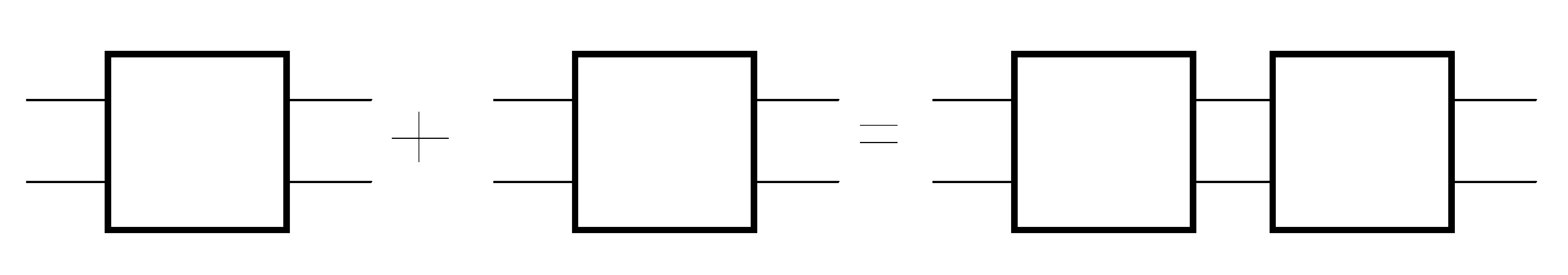
\caption{The Conway sum of two tangles. The tangle $T_1 + T_2$ is pictured on the right.}
\label{fig:ConwaySum}
\end{figure}

Lemma \ref{lem:AdditiveVolume} showed that volume is additive under belted sums of tangles. Similarly, we have the following lemma describing the determinant of a Conway sum of tangles. 

\begin{lemma}\label{lem:MultiplicativeDeterminant}
Let $T_1, \hdots, T_n$ be tangles and let $T = T_1 + \hdots + T_n$ be the tangle sum. Then $\det(D(T)) = \det(D(T_1))\hdots\det(D(T_n))$.
\end{lemma}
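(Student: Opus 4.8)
The plan is to compute the determinant from the Kauffman bracket and to exploit the fact that the Kauffman bracket skein module of a tangle (a disk with four marked boundary points) is free of rank two. Recall that if $\langle L\rangle$ denotes the Kauffman bracket normalized so that $\langle\bigcirc\rangle=1$ and so that an extra split circle multiplies the bracket by $\delta=-A^2-A^{-2}$, then $\det(L)=\lvert\langle L\rangle\rvert$ after the substitution $A=e^{i\pi/4}$; this is just the statement $\det(L)=\lvert V_L(-1)\rvert$, since the writhe normalization $(-A^3)^{-w(L)}$ relating the reduced bracket to the Jones polynomial has modulus one. I would therefore prove the multiplicativity at the level of the bracket, as an identity of Laurent polynomials in $A$, and only specialize at the very end.

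First I would fix the two crossingless tangles $[0]$ and $[\infty]$ (the two ways of joining the four endpoints without crossings) as a basis of the skein module, and write $\langle T_i\rangle=\alpha_i\,[0]+\beta_i\,[\infty]$ for Laurent polynomials $\alpha_i,\beta_i$. A short diagram chase, matching the closures pictured in Figure~\ref{fig:TangleClosures}, shows that
\[
\langle D(T)\rangle=\alpha_T+\delta\,\beta_T,
\qquad
\langle N(T)\rangle=\delta\,\alpha_T+\beta_T,
\]
since closing $[0]$ and $[\infty]$ produces either a single unknot (bracket $1$) or a two-component unlink (bracket $\delta$) in each case.

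Next I would record how the Conway sum of Figure~\ref{fig:ConwaySum} acts on the basis. Tracing the arcs through the juxtaposition gives the multiplication table $[0]+[0]=[0]$, $[0]+[\infty]=[\infty]+[0]=[\infty]$, and $[\infty]+[\infty]=\delta\,[\infty]$, whence
\[
\langle T_1+T_2\rangle=\alpha_1\alpha_2\,[0]+\bigl(\alpha_1\beta_2+\alpha_2\beta_1+\delta\,\beta_1\beta_2\bigr)[\infty].
\]
Applying the closure formula for $D$ and expanding then yields
\[
\langle D(T_1+T_2)\rangle=\alpha_1\alpha_2+\delta(\alpha_1\beta_2+\alpha_2\beta_1)+\delta^2\beta_1\beta_2,
\]
and this factors as $(\alpha_1+\delta\beta_1)(\alpha_2+\delta\beta_2)=\langle D(T_1)\rangle\,\langle D(T_2)\rangle$. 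Specializing $A=e^{i\pi/4}$ and taking absolute values gives $\det(D(T_1+T_2))=\det(D(T_1))\det(D(T_2))$, and the general statement for $T=T_1+\dots+T_n$ follows by induction on $n$.

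The bookkeeping above is routine; the step that needs the most care is the interaction between the Conway sum and the closure $D$ — namely that the ``off-diagonal'' $\delta$-terms produced by the sum are precisely the cross terms of the product $(\alpha_1+\delta\beta_1)(\alpha_2+\delta\beta_2)$. I would also verify the labeling conventions so that it is genuinely $D(T)$, and not $N(T)$, whose bracket realizes the multiplicative functional $\alpha_T+\delta\beta_T$; indeed the same formulas show $\langle N(T_1+T_2)\rangle\neq\langle N(T_1)\rangle\langle N(T_2)\rangle$ in general, so the choice of closure is essential. (An alternative proof via a Mayer--Vietoris computation of $\lvert H_1(\Sigma_2)\rvert$ for the double branched cover is possible, but the skein-module computation is cleaner and avoids worrying about torsion versus rank.)
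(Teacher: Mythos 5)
Your proof is correct, but it takes a genuinely different route from the paper. The paper's proof is a two-line citation: it invokes Kauffman's theorem that the Conway polynomial satisfies $\nabla_{D(T_1+T_2)}=\nabla_{D(T_1)}\nabla_{D(T_2)}$, passes to the Alexander polynomial via $\Delta_L(x^2)=\nabla_L(x-x^{-1})$, and evaluates at $-1$ using $\det(L)=\lvert\Delta_L(-1)\rvert$; induction gives the general case. You instead derive the multiplicativity from scratch in the rank-two Kauffman bracket skein module of the four-punctured disk and evaluate at $A=e^{i\pi/4}$ using $\det(L)=\lvert V_L(-1)\rvert$. Your basis computations check out: with $[0]$ the horizontal smoothing and $[\infty]$ the vertical one, $\langle D(T)\rangle=\alpha_T+\delta\beta_T$, the multiplication table for horizontal juxtaposition is as you state, and the factorization $\langle D(T_1+T_2)\rangle=(\alpha_1+\delta\beta_1)(\alpha_2+\delta\beta_2)$ is an honest polynomial identity, so taking moduli at $\lvert A\rvert=1$ (where the writhe correction $(-A^3)^{-w}$ is unimodular) gives the claim. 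What each approach buys: the paper's argument is shorter and outsources all the work to a ready-made multiplicativity theorem for $\nabla$, while yours is self-contained, makes the role of the $D$-closure (as opposed to $N$) completely transparent via the functional $\alpha+\delta\beta$, and is the bracket-level shadow of the classical fact that tangle fractions add, so denominators multiply. Your caveat about checking the $N$ versus $D$ labeling convention is exactly the right thing to flag, and your computation resolves it correctly for the conventions in the paper's figures.
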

\begin{proof}
We prove the case $n = 2$, and the result then follows by induction. By  \cite[Theorem 4.9]{Kauffman}, $\nabla_{D(T_1 + T_2)} = \nabla_{D(T_1)}\nabla_{D(T_2)}$ where $\nabla$ denotes the Conway polynomial. The Conway polynomial $\nabla_L$ of a link $L$ is related to the Alexander polynomial $\Delta_L$ of $L$ via $\Delta_L(x^2) = \nabla_L(x-x\inv)$. Now $\det(L) = |\Delta_L(-1)|$, and it follows that $\det(D(T_1+T_2)) = \det(D(T_1))\det(D(T_2))$.
\end{proof} 

We now recall a result of Futer, Kalfagianni, and Purcell \cite{FKP} which will play an important role in the proofs of Theorems \ref{thm:CycleEasy}, \ref{thm:nonalt}, and \ref{thm:spectrum}. We begin with the following definition. A tangle $T$ is an \textit{east-west} twist if $N(T)$ is the standard diagram of a $(2,q)$-torus link. To simplify notation throughout, we define $\xi_n$ as follows:
\begin{equation}\label{def:xi}
\xi_n := \left( 1 - \left(\dfrac{8 \pi}{11.524 + n \sqrt[4]2} \right)^2\right)^{3/2}
\end{equation}
Note that $\displaystyle \lim_{n \to \infty} \xi_n = 1$. We now state the following theorem which follows directly from the proof of \cite[Theorem 1.5]{FKP}.

\begin{theorem}\label{thm:11.524}
Let $T_1, \hdots, T_n$, $n\geq 12$, be tangles admitting prime, alternating diagrams, none of which is an east-west twist. Let $K$ be a knot or link which can be written as the closure of the Conway sum $N(T_1+\hdots+T_n)$. Let $L$ be the belted sum $B(T_1)+ \hdots + B(T_n)$. Then $K$ is hyperbolic and 
\begin{equation}\label{eqn:11.524}
\textup{vol}(S^3 \backslash K) \geq  \xi_n\textup{vol}(S^3 \backslash L)
\end{equation}
\end{theorem}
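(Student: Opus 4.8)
The plan is to realize $K = N(T_1 + \cdots + T_n)$ as a Dehn filling of the belted sum $L = B(T_1) + \cdots + B(T_n)$ and then apply the Dehn-filling volume estimate of Futer, Kalfagianni, and Purcell, tracking the filling slopes carefully enough to recover the explicit constant $\xi_n$. Concretely, I would first observe that since each $T_i$ is a prime, alternating tangle that is not an east--west twist, each belted link $B(T_i)$ is hyperbolic of finite volume, so by Lemma~\ref{lem:AdditiveVolume} the belted sum $L$ is itself a finite-volume hyperbolic link whose complement contains the totally geodesic twice-punctured disks along which the summands were glued. The belt components of $L$ constitute a distinguished collection of cusps, and filling these cusps along the appropriate slopes undoes the belting and reconnects the tangle strands into exactly the Conway-sum necklace; that is, $S^3 \backslash K$ is obtained from $S^3 \backslash L$ by Dehn filling the belt cusps. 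Establishing this topological identification cleanly is the first task.

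Granting it, I would invoke the Dehn-filling volume theorem of Futer--Kalfagianni--Purcell: if $M$ is finite-volume hyperbolic and one fills a subcollection of cusps along slopes each of length exceeding $2\pi$, with minimal slope length $\ell_{\min}$, then the filled manifold is hyperbolic and its volume is at least $\left(1 - (2\pi/\ell_{\min})^2\right)^{3/2}\,\text{vol}(M)$. Applied to $M = S^3 \backslash L$ with the belt-cusp fillings, this statement simultaneously produces the hyperbolicity of $K$ and the desired volume inequality, provided one can verify $\ell_{\min} > 2\pi$ and identify the resulting factor with $\xi_n$.

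The crux, and the step I expect to be the main obstacle, is the lower bound on the filling slope lengths; this is precisely the geometric content one extracts from the proof of \cite[Theorem 1.5]{FKP}. Using the totally geodesic twice-punctured disks to control the maximal horoball cross-sections of the belt cusps, one shows that each filling slope, threading through the $n$ glued tangle pieces, has length at least $(11.524 + n\sqrt[4]{2})/4$; equivalently $2\pi/\ell_{\min} \leq 8\pi/(11.524 + n\sqrt[4]{2})$, so that $\left(1 - (2\pi/\ell_{\min})^2\right)^{3/2} \geq \xi_n$. The hypothesis $n \geq 12$ enters exactly here, since $12$ is the smallest value for which $(11.524 + n\sqrt[4]{2})/4 > 2\pi$, guaranteeing both that the Dehn-filling theorem applies and that $\xi_n$ is a genuine positive lower bound. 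Combining the slope-length estimate with the volume inequality then yields $\text{vol}(S^3 \backslash K) \geq \xi_n\,\text{vol}(S^3 \backslash L)$. Beyond citing the two Futer--Kalfagianni--Purcell ingredients, the only new work is bookkeeping: confirming that the belt cusps of the belted sum are the cusps being filled, and that the slope-length bound accumulates linearly in $n$ with the stated constants $11.524$ and $\sqrt[4]{2}$.
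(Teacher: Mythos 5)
Your proposal is correct and is essentially the same argument the paper relies on: the paper offers no independent proof of this theorem, stating only that it follows from the proof of \cite[Theorem 1.5]{FKP}, and your outline --- realizing $S^3 \backslash K$ as the meridional filling of the belt cusp of $S^3 \backslash L$, applying the Futer--Kalfagianni--Purcell bound $\left(1 - (2\pi/\ell_{\min})^2\right)^{3/2}\mathrm{vol}(M)$, and importing their slope-length estimate $\ell_{\min} \geq (11.524 + n\sqrt[4]{2})/4$ --- is precisely the content of that proof. Your accounting of where $n \geq 12$ enters (it is the smallest $n$ making this bound exceed $2\pi$) is also correct.
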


We record here the following well-known theorem of W. Thurston which may be found in \cite[Theorem 3.4]{ThurstonNotes}.

\begin{theorem}\label{thm:VolumeDrops}
Suppose that $M_1$ is a complete, finite-volume, hyperbolic manifold and that $M_2 \neq M_1$ is a complete hyperbolic manifold obtained from $M_1$ by Dehn filling along one of the cusps. Then $\text{vol}(M_1) > \text{vol}(M_2)$.
\end{theorem}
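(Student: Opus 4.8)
The plan is to reduce this to a monotonicity statement for volume along a deformation of hyperbolic structures, obtaining the inequality from the Gromov norm and the strict inequality from the Schläfli variation formula. Write $\gamma \subset M_2$ for the core geodesic of the solid torus inserted during the Dehn filling, so that topologically $M_1$ is the drilled complement $M_2 \setminus \gamma$ and $M_2$ is recovered from $M_1$ by filling the corresponding cusp.

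First I would record the non-strict inequality via the Gromov (simplicial) norm. By the Gromov--Thurston proportionality theorem, every complete finite-volume hyperbolic $3$-manifold $M$ satisfies $\text{vol}(M) = v_3 \, \|M\|$, where $\|M\|$ denotes the simplicial volume (interpreted relative to the cusps in the cusped case) and $v_3$ is the volume of the regular ideal tetrahedron. Since $M_2$ arises from $M_1$ by Dehn filling, any fundamental cycle for $M_1$ can be capped off across the filling solid torus to produce a fundamental cycle for $M_2$ of no larger norm, whence $\|M_2\| \le \|M_1\|$ and therefore $\text{vol}(M_2) \le \text{vol}(M_1)$. This settles everything except strictness.

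To upgrade to a strict inequality I would interpolate between the two structures through hyperbolic cone manifolds. Consider the one-parameter family of hyperbolic cone structures on the underlying space of $M_2$ with singular locus $\gamma$ and cone angle $\alpha$: at $\alpha = 2\pi$ the structure is smooth and equals $M_2$, while as $\alpha \to 0$ the cone locus opens into a rank-two cusp and the structures converge geometrically to the complete structure on $M_1$. Along this family the Schläfli formula gives $\tfrac{d}{d\alpha}\,\text{vol}(M_\alpha) = -\tfrac12 \, \ell_\alpha(\gamma)$, where $\ell_\alpha(\gamma) > 0$ is the length of the singular geodesic. Integrating from $\alpha = 0$ to $\alpha = 2\pi$ yields $\text{vol}(M_2) = \text{vol}(M_1) - \tfrac12 \int_0^{2\pi} \ell_\alpha(\gamma)\, d\alpha < \text{vol}(M_1)$, since the integrand is strictly positive.

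The main obstacle is justifying the deformation step: one must produce a genuine smooth family of cone structures joining $M_1$ to $M_2$ and verify that no degeneration or collapse of the singular locus occurs as $\alpha$ ranges over $(0,2\pi]$, so that $\ell_\alpha(\gamma)$ remains positive and the Schläfli formula applies throughout. This is precisely the content of the local rigidity and deformation theory for hyperbolic cone manifolds of cone angle at most $2\pi$ (Thurston's hyperbolic Dehn surgery theorem handles sufficiently long fillings, and the general case rests on the Hodgson--Kerckhoff rigidity theorem). Granting that input, strictness reduces to the positivity of length in the Schläfli formula.
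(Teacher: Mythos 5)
The paper does not prove this statement at all: it is quoted verbatim as Thurston's theorem (cited to \cite{ThurstonNotes}) and used as a black box, so there is no internal proof to compare against. Judged on its own merits, your first step is sound: the degree-one collapsing map from $M_1$ to $M_2$ gives $\|M_2\| \le \|M_1\|$, and Gromov--Thurston proportionality converts this to $\text{vol}(M_2) \le \text{vol}(M_1)$. That part is a complete and standard argument for the non-strict inequality.

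The gap is in the strictness step. The Schl\"afli computation is fine \emph{once you have} a non-degenerating family of hyperbolic cone structures $M_\alpha$ for all $\alpha \in (0, 2\pi]$ interpolating between $M_1$ and $M_2$, but producing that family is not ``precisely the content'' of Hodgson--Kerckhoff local rigidity. Local rigidity only lets you deform the cone angle a little; to push the angle all the way from $2\pi$ down to $0$ you must rule out degeneration (the singular geodesic shrinking to zero length, or the tube radius collapsing) along the whole path, and this is known only under additional hypotheses --- Hodgson and Kerckhoff's global results require a lower bound on the normalized length of the filling slope, and Kojima's global rigidity covers cone angles at most $\pi$, which does not reach the smooth structure at $2\pi$. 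So as written your argument proves strictness only for sufficiently long Dehn fillings, whereas the theorem is asserted for an arbitrary hyperbolic filling. The general case is genuinely harder and is handled by a different mechanism: Thurston's original argument upgrades the simplicial-volume comparison itself to a strict inequality (an efficient fundamental cycle for $M_1$ pushes forward to a cycle for $M_2$ whose straightened simplices cannot all be nearly regular ideal), and one can alternatively invoke finite-volume versions of Besson--Courtois--Gallot volume rigidity, since the collapsing map is degree one but not homotopic to an isometry. You should either add the normalized-length hypothesis to your cone-deformation step and treat the remaining (finitely many, but not individually checkable) short fillings separately, or replace the strictness argument with one of these general-purpose proofs.
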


\subsection{Adequate Tangles}
We state some results about adequate link diagrams that will play an important role in the proofs of Theorems \ref{thm:CycleEasy}, \ref{thm:nonalt}, \ref{thm:spectrum}, and \ref{thm:spectrum2}. For the definition of an adequate link diagram and nugatory crossings, see \cite[Definition 5.2]{Lickorish}.

\begin{definition}
A link diagram $D \subset S^2$ is \textit{prime} if any simple closed curve in $S^2$ that meets $D$ transversely at two points bounds, on one side of it, a disk that intersects $D$ in a diagram $U$ of the unknotted ball-arc pair. The diagram $D$ is \textit{strongly prime} if, in addition, $U$ is the zero-crossing diagram. 
\end{definition}

We will use the following theorem, which may be found in \cite[Corollary 3.21]{Guts} to show that a link diagram is prime.

\begin{theorem}\label{thm:prime}
Suppose that $K$ is a non-split, prime link.
Then every adequate diagram of $K$ without nugatory crossings is prime.
\end{theorem}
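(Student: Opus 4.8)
The plan is to verify the defining condition of a prime diagram directly, for an arbitrary decomposing curve, converting primeness of the link $K$ into a crossing count by exploiting that adequate diagrams are minimal. I would begin with an arbitrary simple closed curve $\gamma \subset S^2$ meeting $D$ transversely in exactly two points, and let $\Delta_1, \Delta_2$ be the two disks that $\gamma$ cuts out of $S^2$. Since each component of $K$ crosses $\gamma$ an even number of times, the two points of $\gamma \cap D$ lie on a single component, so capping $\gamma$ off above and below the projection plane produces a $2$-sphere $\widehat\gamma$ meeting $K$ transversely in just those two points; this realizes $K$ as a connected sum $K = K_1 \# K_2$, where $K_i$ is the link carried by the tangle $T_i := D \cap \Delta_i$. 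I would then record the two elementary crossing counts that drive everything: no crossing of $D$ lies on $\gamma$, so $c(D) = c(T_1) + c(T_2)$, and closing a tangle introduces no crossings, so the diagram $D_i$ of $K_i$ obtained this way satisfies $c(D_i) = c(T_i)$.

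Next I would invoke the classical fact that a connected, reduced, adequate diagram attains the crossing number of its link, which comes from the bound on the span of the Kauffman bracket (see \cite{Lickorish}). As $K$ is non-split its diagram $D$ is connected, and by hypothesis $D$ is adequate with no nugatory crossings, so $c(D) = c(K)$. Since $K$ is prime, one summand, say $K_2$, must be the unknot, so that $K_1 = K$ and $c(K_1) = c(K) = c(D)$. Because $D_1$ is a diagram of $K_1$ it has at least $c(K_1)$ crossings, and I would then chase the forced equalities in
$$c(D) = c(K_1) \le c(D_1) = c(T_1) = c(D) - c(T_2) = c(D) - c(D_2) \le c(D),$$
which collapses to $c(D_2) = 0$. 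Thus $T_2$ has no crossings and is the zero-crossing diagram of the unknotted ball-arc pair, so $\gamma$ bounds on its $\Delta_2$ side a trivial ball-arc pair. As $\gamma$ was arbitrary, this shows that $D$ is prime (indeed strongly prime).

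I expect the hard part to be the minimality of reduced adequate diagrams: this is the one nontrivial ingredient, and it is precisely where adequacy and the absence of nugatory crossings are used, since it is what licenses replacing $c(K)$ by $c(D)$. The only other place needing care is the three-dimensional step, namely that a sphere meeting $K$ in two points gives a genuine connected sum and that primeness of a non-split link forces a trivial factor; the parity remark that both points lie on one component is what keeps this routine for links as well as knots. Everything after the minimality input is just additivity of the crossing count across $\gamma$.
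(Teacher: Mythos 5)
Your argument is correct, but note that the paper itself offers no proof of Theorem \ref{thm:prime}: it is imported verbatim as \cite[Corollary 3.21]{Guts}, so there is no in-paper argument to match yours against. What you have written is a legitimate, self-contained derivation whose only substantive input is the minimality of adequate diagrams, which is exactly Proposition \ref{prop:AdequateFacts}(2) of this paper; everything else is additivity of the crossing count across $\gamma$ together with the standard facts that a sphere meeting $K$ in two points exhibits a connected sum and that primeness forces one summand to be trivial. Two small points deserve to be made explicit. First, you should say why $D$ is connected (a disconnected diagram yields a splitting sphere, contradicting non-splitness); connectivity is needed both for the span/minimality statement and, at the end, to rule out crossingless closed components of $D$ sitting inside $\Delta_2$, so that the zero-crossing tangle $T_2$ really is a single trivial arc rather than an arc together with split circles. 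Second, the labeling is a genuine relabeling: primeness only tells you that \emph{some} summand is the unknot, and your crossing-count chain then forces the tangle on \emph{that} side to have no crossings, which is all the definition of a prime diagram asks for. A pleasant feature of your route is that you never need the three-dimensional fact that an unknotted summand bounds a trivial ball--arc pair; the inequality $c(D)=c(K_1)\le c(T_1)=c(D)-c(T_2)$ does the diagrammatic work directly and in fact delivers the stronger conclusion that $D$ is strongly prime.
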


The following proposition gives a way to show that a link is non-alternating. We will use this in the proof of Theorem \ref{thm:nonalt} where we show that there exist geometrically maximal sequences of non-alternating links. 

\begin{proposition}\label{prop:AdequateFacts}
Let $L$ be a link and $D$ a diagram for $L$ having $n$ crossings. Let $\emph{breadth}(L)$ be the breadth of the Jones polynomial for $L$, in other words, the difference between the maximal and minimal degrees of the Jones polynomial for $L$. 
\begin{enumerate}
\item If $D$ is reduced and alternating, then $D$ is adequate.
\item If $D$ is an adequate diagram, then $c(L) = n$.
\item If $D$ is reduced and alternating, then $\emph{breadth}(L) = n$.
\item If $D$ is non-alternating and prime, then $\emph{breadth}(L) < n$.
\item If $L$ has a prime, non-alternating, adequate diagram, then $L$ is non-alternating.
\end{enumerate}
\end{proposition}
\begin{proof}
Proofs of parts (1) through (4) may be found in \cite[Proposition 5.3, Theorem 5.9, and Corollary 5.14]{Lickorish}. 

Suppose $D_1$ is a prime, non-alternating, adequate diagram for $L$, and that $D_2$ is an alternating diagram for $L$. By removing nugatory crossings, we may assume that $D_2$ is a reduced, alternating diagram. Therefore both $D_1$ and $D_2$ are adequate diagrams and have the same number of crossings by part (2) of the proposition. Let $n$ be the number of crossings in $D_1$ (or $D_2$). Since $D_1$ is non-alternating, part (4) of the proposition implies $\text{breadth}(L) <n$. On the other hand, since $D_2$ is reduced and alternating, part (3) implies that $\text{breadth}(L) = n$, a contradiction.
\end{proof}

\begin{definition}
A tangle diagram $T$ is \textit{adequate} if the diagrams of both $N(T)$ and $D(T)$ are adequate.  In the event that both closures produce reduced, alternating diagrams, the tangle $T$ is said to be \textit{strongly alternating}. A strongly alternating tangle is adequate.
\end{definition}

\begin{proposition}\label{prop:adequate}
The tangle sum of $n$ adequate tangles is adequate for $n \geq 2$.
\end{proposition}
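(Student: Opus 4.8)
The plan is to prove the statement by induction on $n$, with the case $n = 2$ carrying all the content: since $T_1 + \cdots + T_n = (T_1 + \cdots + T_{n-1}) + T_n$, once the $n=2$ case is known, the inductive hypothesis that $T_1 + \cdots + T_{n-1}$ is adequate together with adequacy of $T_n$ gives the result. Recall that a diagram is adequate precisely when it is both $A$-adequate and $B$-adequate, i.e. when neither the all-$A$ state graph nor the all-$B$ state graph has a loop-edge (an edge both of whose endpoints lie on a single state circle). Thus to prove $T_1 + T_2$ is an adequate tangle I must show that each of the two diagrams $N(T_1+T_2)$ and $D(T_1+T_2)$, in both its all-$A$ and all-$B$ states, has no loop-edge. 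Passing to the mirror image exchanges $A$- and $B$-states and carries adequate tangles to adequate tangles, so it suffices to rule out loop-edges in the all-$A$ states of $N(T_1+T_2)$ and $D(T_1+T_2)$.

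The main tool is the observation that the all-$A$ smoothing of a $2$-tangle $T_i$ consists of some closed ``internal'' loops together with a planar matching of its four boundary endpoints, and the only two planar matchings available join either the two top endpoints to the two bottom endpoints (type $\infty$) or the two left endpoints to the two right endpoints (type $0$). These internal loops occur, unchanged, as state circles of $N(T_i)$, of $D(T_i)$, and of any closure of a Conway sum containing $T_i$. Suppose now that some crossing $c$, say $c \in T_1$, is a loop-edge in the all-$A$ state of $N(T_1+T_2)$, meaning both arcs of its $A$-smoothing lie on a single state circle $\gamma$. If both arcs lie on an internal loop of $T_1$, that loop is a state circle of $N(T_1)$ carrying both arcs, contradicting $A$-adequacy of $N(T_1)$. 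Otherwise the arcs lie on the boundary paths of $T_1$ determined by its matching type, and I would trace $\gamma$ through the gluing arcs (which identify the east endpoints of $T_1$ with the west endpoints of $T_2$) and the closure arcs of $N$, performing the $2 \times 2$ case analysis on the matching types of $T_1$ and $T_2$.

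The crux of the argument is that in every case this loop-edge localizes to a loop-edge of either $N(T_1)$ or $D(T_1)$. For instance, if $T_1$ has matching type $\infty$ and the two arcs of $c$ lie on the distinct top and bottom boundary paths, then whether these paths close up into one circle or two in $N(T_1+T_2)$ depends on the matching type of $T_2$: when they close into a single circle $\gamma$, the denominator closure arcs of $D(T_1)$ join the same pair of boundary paths into one circle, so $c$ is a loop-edge of $D(T_1)$; when they remain separate in the sum, $c$ is not a loop-edge at all. Carrying out the remaining cases---and the entirely analogous analysis for $D(T_1+T_2)$ and for crossings $c \in T_2$---shows that any loop-edge of a closure of $T_1 + T_2$ forces a loop-edge of $N(T_i)$ or $D(T_i)$ for some $i$, contradicting adequacy of the summands. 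I expect the bookkeeping of this case analysis to be the main obstacle; in particular it is here that one sees why adequacy of a tangle must require adequacy of both closures, since the localized loop-edge lands in $N(T_i)$ in some configurations and in $D(T_i)$ in others.
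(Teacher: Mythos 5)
Your induction scaffolding is exactly what the paper does, but the paper disposes of the entire $n=2$ case with a citation: it invokes Lickorish--Thistlethwaite \cite[Proposition 4]{Thistlethwaite} for the statement that the sum of two adequate tangles is adequate, and then inducts. What you have written is, in effect, a reconstruction of the proof behind that citation. Your key structural observation is the right one: the all-$A$ state of a $2$-tangle consists of closed internal loops plus one of the two planar matchings of the four endpoints, the internal loops persist as complete state circles in every closure, and therefore a loop-edge at a crossing of $T_1$ can only arise from the two boundary paths being merged into a single circle --- which already happens in one of $N(T_1)$ or $D(T_1)$, contradicting adequacy of the summand. This is precisely why adequacy of a tangle must be defined via both closures, as you observe. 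What your route buys is a self-contained proof; what the paper's route buys is brevity and an audit trail to the original source.

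Two small corrections to your sketch. First, the case analysis is lighter than you fear: if the two $A$-arcs of a crossing $c$ of $T_1$ lie on the two \emph{distinct} boundary paths, then $c$ is a loop-edge of $N(T_1)$ when the matching is vertical and of $D(T_1)$ when it is horizontal, \emph{independently of $T_2$} --- the two paths of a vertical matching are always joined into one circle by the numerator closure of $T_1$ alone, and those of a horizontal matching by the denominator closure. So adequacy of the summand kills this configuration outright, and no tracing through the gluing arcs or through $T_2$'s matching type is actually needed; the $2\times 2$ analysis only decides whether $c$ \emph{would have been} a loop-edge of the sum, which is irrelevant once the configuration is excluded. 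Second, your illustrative case mislabels the types: a type-$\infty$ (vertical) matching produces left and right boundary paths, which are merged by the $N$-closure, whereas the ``top and bottom boundary paths'' you describe belong to the type-$0$ matching and are merged by the $D$-closure; as written, the example pairs the $\infty$ label with the type-$0$ configuration. Neither issue affects the validity of the argument, and your trichotomy should also explicitly note the mixed case (one arc on an internal loop, one on a boundary path), which can never produce a loop-edge since an internal loop is a complete state circle on its own.
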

\begin{proof}
It follows from work of Lickorish and Thistlethwaite \cite[Proposition 4]{Thistlethwaite} that the tangle sum of two adequate tangles is adequate. The proposition readily follows by induction on $n$.
\end{proof}

We close this section with the following corollary showing that crossing number of adequate tangles is additive. This plays a key role in the proof of Theorems \ref{thm:IntroSpectrumVol} and \ref{thm:IntroSpectrumDet}.

\begin{corollary}\label{cor:additivity}
Let $T_1, \hdots, T_n$ be adequate tangle diagrams and let $c(N(T_i))$ be the crossing number of the closure for $i = 1,\hdots n$. Let $T_1 + \hdots + T_n$ be the tangle sum. Then the crossing numbers satisfy 
\begin{align*}
c(N(T_1 + \hdots + T_n)) & = c(N(T_1)) + \hdots + c(N(T_n))\\
c(D(T_1 + \hdots + T_n)) & = c(D(T_1)) + \hdots + c(D(T_n))
\end{align*}
\end{corollary}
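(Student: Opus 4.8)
The plan is to combine the purely diagrammatic additivity of the raw crossing count with the fact, supplied by Proposition \ref{prop:AdequateFacts}(2), that an adequate diagram realizes the crossing number of its underlying link. The whole statement should then fall out as a short chain of equalities, with Proposition \ref{prop:adequate} guaranteeing that adequacy is preserved under tangle sum.

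First I would record the combinatorial observation that neither the tangle sum nor the closures $N$ and $D$ create new crossings. Connecting $T_1, \hdots, T_n$ side by side as in Figure \ref{fig:ConwaySum} and then capping off the four outer strands with arcs to form the numerator or denominator closure only adds arcs, never crossings. Writing $n(\cdot)$ for the number of crossings appearing in a diagram, this gives the identity $n(N(T_1 + \hdots + T_n)) = n(N(T_1)) + \hdots + n(N(T_n))$, and the analogous identity with $D$ in place of $N$. (In fact $n(N(T_i)) = n(D(T_i))$ is just the number of crossings in the tangle $T_i$ itself, since the closing arcs contribute none.)

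Next I would upgrade these diagrammatic identities to statements about crossing numbers. Since each $T_i$ is adequate, the diagrams $N(T_i)$ and $D(T_i)$ are adequate, so Proposition \ref{prop:AdequateFacts}(2) yields $c(N(T_i)) = n(N(T_i))$ and $c(D(T_i)) = n(D(T_i))$. By Proposition \ref{prop:adequate} the tangle sum $T_1 + \hdots + T_n$ is again adequate, so both of its closures $N(T_1 + \hdots + T_n)$ and $D(T_1 + \hdots + T_n)$ are adequate diagrams, and Proposition \ref{prop:AdequateFacts}(2) applies once more to give $c(N(T_1 + \hdots + T_n)) = n(N(T_1 + \hdots + T_n))$ together with the corresponding equality for $D$.

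Finally I would chain the pieces together, obtaining $c(N(T_1 + \hdots + T_n)) = n(N(T_1 + \hdots + T_n)) = \sum_i n(N(T_i)) = \sum_i c(N(T_i))$, and the identical computation with $D$ finishes the argument. I do not expect any genuine obstacle here; the only point requiring care is that \emph{both} closures of the summed tangle be adequate, which is precisely what Proposition \ref{prop:adequate} provides, so that Proposition \ref{prop:AdequateFacts}(2) can legitimately be invoked on every term of both equations.
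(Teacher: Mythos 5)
Your proposal is correct and follows essentially the same route as the paper: count the crossings of the natural Conway-sum diagram, use Proposition \ref{prop:adequate} to see that the summed tangle (hence each closure) is adequate, and invoke Proposition \ref{prop:AdequateFacts}(2) to conclude that this diagram realizes the crossing number. Your write-up merely makes explicit the step, left implicit in the paper, that each individual $N(T_i)$ and $D(T_i)$ is an adequate diagram and therefore already realizes $c(N(T_i))$ and $c(D(T_i))$.
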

\begin{proof}
The natural diagram representing the link $N(T_1 + \hdots + T_n)$ as a Conway sum has $c(N(T_1)) + \hdots + c(N(T_n))$ crossings. Since $T_1, \hdots, T_n$ are adequate, we know that $N(T_1 + \hdots + T_n)$ is adequate by Proposition \ref{prop:adequate}. Note that an adequate diagram of a link has the minimal number of crossings by Proposition \ref{prop:AdequateFacts}. Therefore $c(N(T_1 + \hdots + T_n)) = c(N(T_1)) + \hdots + c(N(T_n))$. The proof that $c(D(T_1 + \hdots + T_n)) = c(D(T_1)) + \hdots + c(D(T_n))$ is similar.
\end{proof}

\section{Non-alternating Geometrically Maximal Knots}

We now address the question of whether all the conditions in Theorem \ref{thm:CKPthm} are necessary. We begin with a construction of geometrically maximal sequence of links for which each link contains a cycle of tangles. 

\begin{theorem}\label{thm:CycleEasy}
Let $\{T_i\}_{i = 1}^\infty$ be a sequence of knotty tangles admitting prime, alternating diagrams, none of which is an east-west twist. Let $K_i = N(T_i)$ for all $i$. Suppose that each $K_i$ is hyperbolic and that the sequence $\{K_i\}_{i = 1}^\infty$ is geometrically maximal. For each $n \in \N$, let $T(n)$ be the Conway sum of $n$ copies of $T_n$, and define $K(n) = N(T(n))$. Then $K(n)$ is hyperbolic whenever $n \geq 12$, and the sequence $\{K(n)\}_{n = 1}^\infty$ is a geometrically maximal sequence of links, all but one of which contains a cycle of tangles.
\end{theorem}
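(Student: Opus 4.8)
The plan is to transfer the geometric maximality of the given sequence $\{K_n\}$ to the new sequence $\{K(n)\}$ by controlling the numerator $\text{vol}(K(n))$ and the denominator $c(K(n))$ of $d_{\text{vol}}(K(n))$ separately, each summand-by-summand. Since each $K(n)$ contains a cycle of tangles (for $n\ge 2$), Theorem \ref{thm:CKPthm} does not apply, so the volume estimate must instead come from Theorem \ref{thm:11.524}. Applying that theorem with all $n$ tangle summands equal to $T_n$ (whose diagram is prime, alternating, and not an east--west twist), I get for every $n\ge 12$ that $K(n)=N(T(n))$ is hyperbolic and that $\text{vol}(K(n))\ge \xi_n\,\text{vol}(L(n))$, where $L(n)=B(T_n)+\dots+B(T_n)$ is the belted sum of $n$ copies. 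Each $B(T_n)$ is hyperbolic of finite volume because $T_n$ is knotty, so Lemma \ref{lem:AdditiveVolume} gives $\text{vol}(L(n))=n\,\text{vol}(B(T_n))$, and hence $\text{vol}(K(n))\ge \xi_n\, n\,\text{vol}(B(T_n))$.

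For the crossing number I would first note that a knotty, prime, alternating tangle is strongly alternating, hence adequate, so Corollary \ref{cor:additivity} applies and yields $c(K(n))=c(N(T(n)))=n\,c(N(T_n))=n\,c(K_n)$. Combining the two estimates gives
$$
d_{\text{vol}}(K(n))=\frac{\text{vol}(K(n))}{c(K(n))}\ \ge\ \xi_n\,\frac{\text{vol}(B(T_n))}{c(K_n)}.
$$
The remaining ingredient is to replace $\text{vol}(B(T_n))$ by $\text{vol}(K_n)$. Here I use that $S^3\setminus K_n=S^3\setminus N(T_n)$ is obtained from $S^3\setminus B(T_n)$ by Dehn filling the belt cusp along its meridian; both manifolds are hyperbolic and distinct, so Theorem \ref{thm:VolumeDrops} gives $\text{vol}(B(T_n))>\text{vol}(K_n)$. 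Therefore $d_{\text{vol}}(K(n))>\xi_n\,d_{\text{vol}}(K_n)$. Since $\xi_n\to 1$ and $d_{\text{vol}}(K_n)\to v_8$ by hypothesis, we get $\liminf_n d_{\text{vol}}(K(n))\ge v_8$; together with the universal bound $d_{\text{vol}}(K)\le v_8$ this forces $\lim_n d_{\text{vol}}(K(n))=v_8$, i.e. $\{K(n)\}$ is geometrically maximal.

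For the cycle of tangles, I observe that for each $n\ge 2$ the diagram $K(n)=N(T_n+\dots+T_n)$ admits a visible Conway sphere separating a single copy of $T_n$ from the Conway sum of the remaining $n-1$ copies. The single copy is knotty by hypothesis, and a Conway sum of copies of a knotty tangle is evidently knotty (it is nontrivial and not a portion of a single twist region), so both sides are knotty and $K(n)$ contains a cycle of tangles in the sense of Definition \ref{def:Cycle}. The sole exception is $n=1$, where $K(1)=N(T_1)$ is a single tangle admitting no such splitting; this accounts for the phrase ``all but one.''

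I expect the main obstacle to be the two additivity reductions that pin down the numerator and denominator: verifying adequacy of $T_n$ so that Corollary \ref{cor:additivity} yields exact additivity of the crossing number, and controlling $\text{vol}(B(T_n))$ against $\text{vol}(K_n)$ through the Dehn filling of Theorem \ref{thm:VolumeDrops} (one must confirm the filled manifold is exactly $S^3\setminus K_n$ and is hyperbolic and distinct from $S^3\setminus B(T_n)$). Once both quantities are pinned, up to the factor $\xi_n\to 1$, to $n$ times the corresponding quantities for $K_n$, the limit computation and the squeeze against the upper bound $v_8$ are routine.
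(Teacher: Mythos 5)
Your proposal is correct and follows essentially the same route as the paper: hyperbolicity and the volume lower bound via Theorem \ref{thm:11.524}, additivity of volume under belted sum (Lemma \ref{lem:AdditiveVolume}), the comparison $\text{vol}(B(T_n)) > \text{vol}(K_n)$ via Dehn filling (Theorem \ref{thm:VolumeDrops}), and the squeeze against the universal bound $v_8$. The only deviation is that you invoke Corollary \ref{cor:additivity} to get exact additivity $c(K(n)) = n\,c(K_n)$ (which requires justifying adequacy of $T_n$), whereas the paper only needs, and only uses, the trivial upper bound $c(K(n)) \leq n\,c(K_n)$ coming from counting crossings in the natural Conway-sum diagram.
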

\begin{proof}
By construction, whenever $n \geq 2$, we have that $K(n)$ contains a cycle of tangles. Moreover, whenever $n \geq 12$ we have from Theorem \ref{thm:CKPthm} that $K(n)$ is hyperbolic. We now show that the sequence $\{K(n)\}_{n = 1}^\infty$ is geometrically maximal. Let $L_i = B(T_i)$, the belted link corresponding to $L_i$, and for each $n \in \N$ let $L(n)$ be the belted sum of $n$ copies of $L_n$. By Theorem \ref{thm:11.524} we have that $\text{vol}(K(n)) \geq \xi_n \text{vol}(L(n))$ where $\xi_n$ was defined in (\ref{def:xi}). We obtain
\begin{align}
\notag \text{vol}(L(n)) & = \sum_{i = 1}^n \text{vol}(L_n) && \text{by Lemma \ref{lem:AdditiveVolume}}\\
\notag & = n\, \text{vol}(L_n)\\
\label{eqn:CycleEasy1} & > n\, \text{vol}(K_n) && \text{by Theorem \ref{thm:VolumeDrops}}
\end{align}
Observe that $c(K(n)) \leq n\, c(K_n)$ since the natural diagram demonstrating $K(n)$ as a Conway sum has $n \, c(K_n)$ crossings. Therefore
\begin{align}
\notag \dfrac{\text{vol}(K(n))}{c(K(n))} & \geq \dfrac{\text{vol}(K(n))}{n\, c(K_n)}\\
\notag & \geq \xi_n\, \dfrac{\text{vol}(L(n))}{n\, c(K_n)} && \text{ by Theorem \ref{thm:11.524}}\\
\label{eqn:CycleEasy2} & > \xi_n\, \dfrac{n\, \text{vol}(K_n)}{n\, c(K_n)} && \text{ by (\ref{eqn:CycleEasy1})}
\end{align}
Note that $\lim_{n \to \infty} \xi_n = 1$. Since $\{K_n\}_{n = 1}^\infty$ is geometrically maximal, we know that
$$\lim_{n \to \infty} \dfrac{\text{vol}(K_n)}{c(K_n)} = v_8$$
hence by (\ref{eqn:CycleEasy2}) we obtain
$$\lim_{n \to \infty} d_{\text{vol}}(K(n)) = \lim_{n \to \infty} \frac{\text{vol}(K(n))}{c(K(n))} \geq \lim_{n \to \infty} \xi_n \, \frac{ \text{vol}(K_n)}{c(K_n)} = v_8$$
implying $\{K(n)\}_{n = 1}^\infty$ is geometrically maximal.
\end{proof}

We can use a method similar to the proof of Theorem \ref{thm:CycleEasy} to prove that there exist geometrically maximal sequences of non-alternating links. We will construct a specific example of such a sequence. We begin by considering a family of weaving tangles in Definition \ref{def:WeavingTangle}. By taking a Conway sum of these weaving tangles and their reflections (i.e. the result of changing all the over-crossings to under-crossings and vice versa), we will obtain a geometrically maximal sequence of non-alternating links.

\begin{definition}\label{def:WeavingTangle}
Let $B_m$ be the braid group on $m$ strings. For $i = 1,\hdots m-1$, let $\sigma_i \in B_m$ correspond to twisting the $i$th strand under the $(i+1)$st strand. Let $L_{m,n}$ be the element $(\sigma_1 \sigma_2\inv\sigma_3 \sigma_4\inv\hdots \sigma_{m-1}^{(-1)^{m-1}})^n \in B_m$ depicted on the left hand side of Figure \ref{fig:WeavingTangle}. Let $W_{m,n}$ be the result of closing all but the second and third strands of the braid as in the right hand side of Figure \ref{fig:WeavingTangle}. We will call $W_{m,n}$ the \textit{weaving tangle} of order $n$ on $m$ strands. The knots $N(W_{m,n})$ are the weaving knots of \cite{CKP}.
\end{definition}

\begin{figure}
\def\svgwidth{250pt}
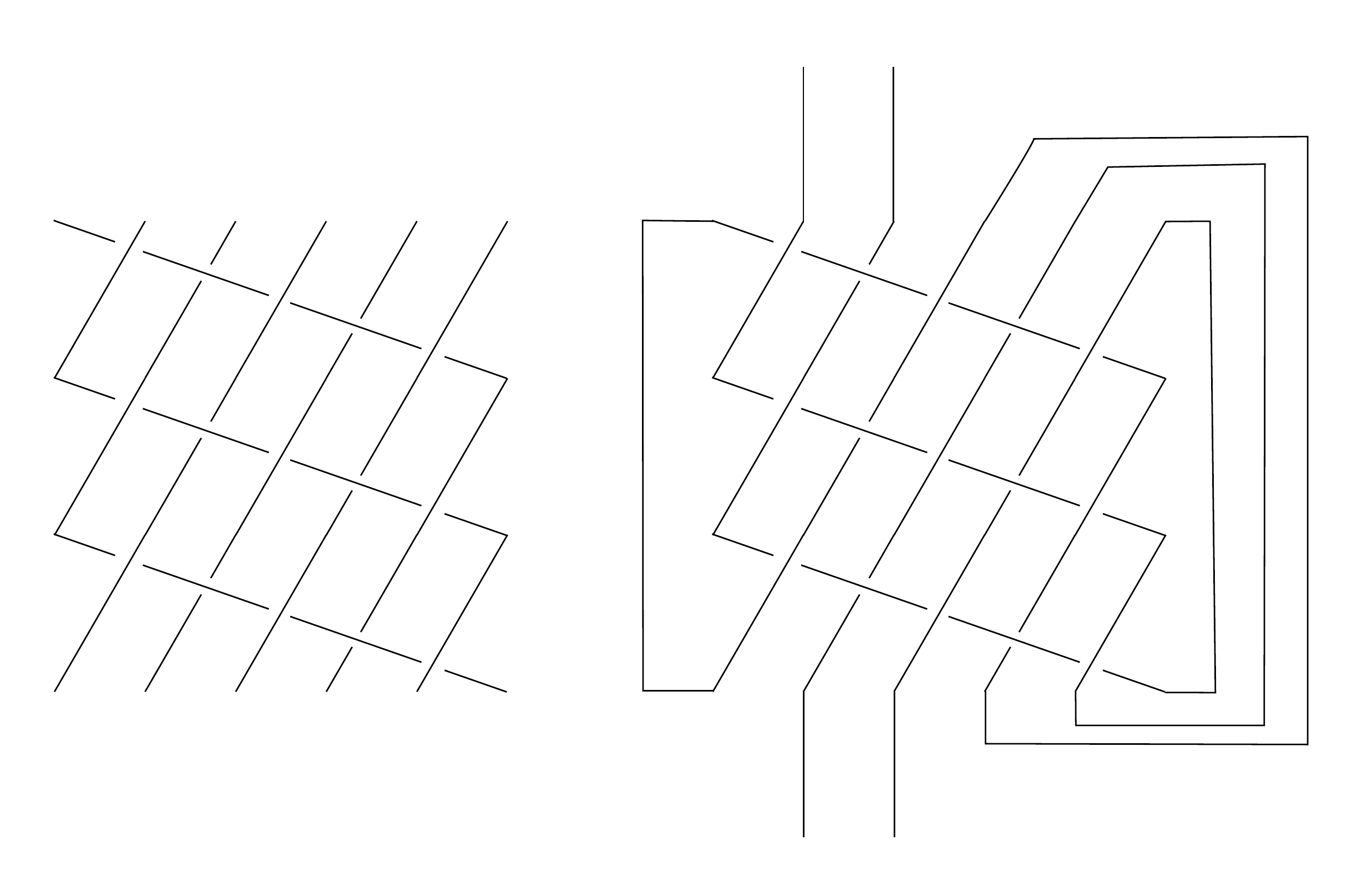
\caption{Left: The braid element $L_{6,3}$. Right: The weaving tangle $W_{6,3}$.}
\label{fig:WeavingTangle}
\end{figure}

\begin{figure}
\def\svgwidth{450pt}
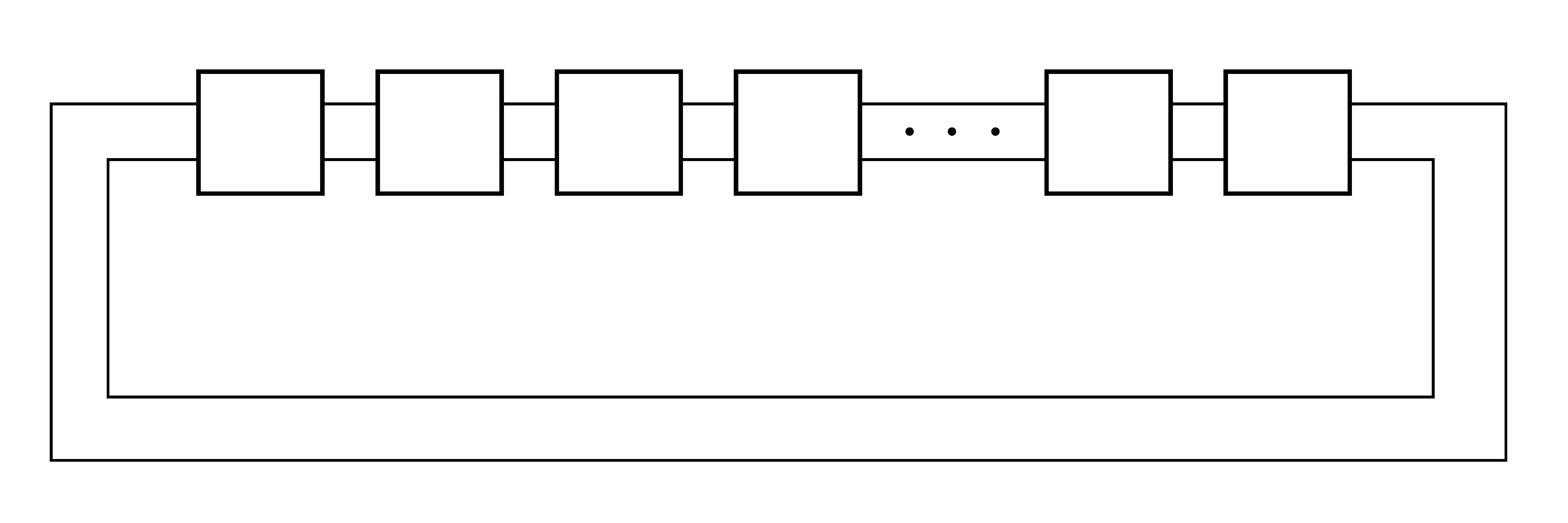
\caption{The link $K(n)$ for the proof of Theorem \ref{thm:nonalt}. It contains $n$ copies of $W_{n,n}$ and $n$ copies of $\overline{W}_{n,n}$.}
\label{fig:Nonalternating}
\end{figure}

\begin{theorem} \label{thm:nonalt}
There exist geometrically maximal sequences of non-alternating links.
\end{theorem}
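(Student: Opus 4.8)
The plan is to construct the sequence $\{K(n)\}$ shown in Figure \ref{fig:Nonalternating} and then to run two essentially independent arguments on it: a volume estimate, nearly identical to the proof of Theorem \ref{thm:CycleEasy}, to establish geometric maximality, and a diagrammatic argument using the results on adequate tangles to establish that each $K(n)$ is genuinely non-alternating. Concretely, fix $n$ large, let $K_n = N(W_{n,n})$ be the weaving knot, which forms a geometrically maximal sequence as $n \to \infty$ by \cite{CKP}, and let $K(n) = N(W_{n,n} + \overline{W}_{n,n} + \hdots + W_{n,n} + \overline{W}_{n,n})$ be the closure of the Conway sum of $n$ copies of $W_{n,n}$ interleaved with $n$ copies of its reflection $\overline{W}_{n,n}$, a total of $2n$ tangles.

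For geometric maximality I would first note that $W_{n,n}$, and hence $\overline{W}_{n,n}$, admits a prime, alternating diagram, is knotty, and is not an east-west twist once $n \geq 3$, so Theorem \ref{thm:11.524} applies to $K(n)$ as soon as $2n \geq 12$, yielding hyperbolicity together with $\text{vol}(K(n)) \geq \xi_{2n}\,\text{vol}(L(n))$, where $L(n)$ is the corresponding belted sum of $2n$ belted tangles. Since a mirror image has the same hyperbolic volume, $\text{vol}(B(\overline{W}_{n,n})) = \text{vol}(B(W_{n,n}))$, so Lemma \ref{lem:AdditiveVolume} gives $\text{vol}(L(n)) = 2n\,\text{vol}(B(W_{n,n}))$; and since filling the belt component recovers $N(W_{n,n})$, Theorem \ref{thm:VolumeDrops} gives $\text{vol}(B(W_{n,n})) > \text{vol}(K_n)$. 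Combining these with the crossing bound $c(K(n)) \leq 2n\,c(K_n)$, coming from the natural Conway-sum diagram, yields $d_{\text{vol}}(K(n)) > \xi_{2n}\,d_{\text{vol}}(K_n)$, exactly as in (\ref{eqn:CycleEasy2}). Letting $n \to \infty$ and using $\xi_{2n} \to 1$ together with $d_{\text{vol}}(K_n) \to v_8$ and the universal upper bound $v_8$ forces $d_{\text{vol}}(K(n)) \to v_8$.

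The new content is showing each $K(n)$ is non-alternating, for which I would invoke Proposition \ref{prop:AdequateFacts}(5): it suffices to exhibit a prime, non-alternating, adequate diagram of $K(n)$. The standard Conway-sum diagram is adequate because $W_{n,n}$ and $\overline{W}_{n,n}$ are strongly alternating, hence adequate, so their tangle sum is adequate by Proposition \ref{prop:adequate}; it is prime because $K(n)$ is hyperbolic, hence non-split and prime, and its diagram has no nugatory crossings, so Theorem \ref{thm:prime} applies. The remaining point, that this diagram is non-alternating, is the heart of the matter and the reason for interleaving each tangle with its reflection: along a strand crossing the junction between a copy of $W_{n,n}$ and the adjacent $\overline{W}_{n,n}$, the reflection reverses the over/under datum, so two consecutive crossings of the same type appear and the over-under alternation fails. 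I would make this precise by tracking the over/under state of the two connecting strands through the last crossing of $W_{n,n}$ and the first crossing of $\overline{W}_{n,n}$.

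I expect this last step, verifying carefully that the interleaved diagram fails to alternate at the tangle junctions, to be the main obstacle, since everything else is a direct transcription of the proof of Theorem \ref{thm:CycleEasy} together with bookkeeping about adequacy, primeness, and the equality of mirror volumes. In particular one must confirm that the failure of alternation is genuine at each junction and cannot be removed by a local isotopy, and that no nugatory crossings are introduced, so that Theorem \ref{thm:prime} and Proposition \ref{prop:AdequateFacts}(5) both apply cleanly.
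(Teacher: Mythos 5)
Your proposal follows the paper's proof essentially verbatim: the same interleaved Conway sum $K(n) = N(W_{n,n} + \overline{W}_{n,n} + \hdots)$, the same volume estimate via Theorem \ref{thm:11.524}, Lemma \ref{lem:AdditiveVolume}, and Theorem \ref{thm:VolumeDrops}, and the same route to non-alternation via adequacy, primeness from hyperbolicity and Theorem \ref{thm:prime}, and Proposition \ref{prop:AdequateFacts}(5). The one step you flag as the ``heart of the matter''---that the interleaved diagram itself fails to alternate---is simply observed from the figure in the paper, and your worry about removing the failure by isotopy is unnecessary, since Proposition \ref{prop:AdequateFacts}(5) is exactly what rules out \emph{any} alternating diagram once one prime, non-alternating, adequate diagram is exhibited.
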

\begin{proof}
Let $W_{k,k}$ be the weaving tangle and let $\overline{W}_{k,k}$ be its reflection, i.e. the result of changing each over-crossing to an under-crossing and vice versa. For $n \geq 1$ define $$K(n) = N(W_{n,n} + \overline{W}_{n,n} + \hdots + W_{n,n} + \overline{W}_{n,n})$$ to be the closure of the Conway sum of $n$ copies of $W_{n,n}$ with $n$ copies of $\overline{W}_{n,n}$ as indicated in Figure \ref{fig:Nonalternating}. Let $K_n = N(W_{n,n})$ and $\overline{K}_n = N(\overline{W}_{n,n})$, and $L_n = B(W_{n,n})$ and $\overline{L}_n = B(\overline{W}_{n,n})$.

We prove that the sequence $\{K(n)\}_{n = 1}^\infty$ is geometrically maximal. First, observe that when $n \geq 6$, Theorem \ref{thm:11.524} implies that $K(n)$ is hyperbolic. Let $$L(n)=B(W_{n,n}) + B(\overline{W}_{n,n}) + \hdots + B(W_{n,n}) + B(\overline{W}_{n,n})$$ be the belted sum of $n$ copies of $B(W_{n,n})$ with $n$ copies of $B(\overline{W}_{n,n})$. Similar to the proof of Theorem \ref{thm:CycleEasy} we have
\begin{align}
\notag \text{vol}(L(n)) & = \sum_{i = 1}^n (\text{vol}(L_n) + \text{vol}(\overline{L}_n)) && \text{by Lemma \ref{lem:AdditiveVolume}}\\
\notag & = n[\text{vol}(L_n) + \text{vol}(\overline{L}_n)]\\
\label{eqn:nonalt1} & > n[\text{vol}(K_n) + \text{vol}(\overline{K}_n)] && \text{by Theorem \ref{thm:VolumeDrops}}
\end{align}
Since $K_n$ and $\overline{K_n}$ are reduced and alternating, by counting the number of crossings in Figure \ref{fig:WeavingTangle} and using Proposition \ref{prop:AdequateFacts}, we find that $c(K_n) = c(\overline{K}_n) = n(n-1)$.  Now the diagram of $K(n)$ depicted in Figure \ref{fig:Nonalternating} has $2n^2(n-1)$ crossings, implying $c(K(n)) \leq 2n^2(n-1)$.  Therefore
\begin{align}
\notag \frac{\text{vol}(K(n))}{c(K(n))} & \geq \frac{\text{vol}(K(n))}{2n^2(n-1)}\\
\notag & \geq \xi_{2n} \frac{\text{vol}(L(n))}{2n^2(n-1)} && \text{by Theorem \ref{thm:11.524}}\\
\notag & > \xi_{2n} \frac{n[\text{vol}(K_n) + \text{vol}(\overline{K}_n)]}{2n^2(n-1)} && \text{by (\ref{eqn:nonalt1})} \\
& = \frac{\xi_{2n}}{2} \left(\frac{\text{vol}(K_n)}{c(K_n)} + \frac{\text{vol}(\overline{K}_n)}{c(\overline{K}_n)} \right)\\
\label{eqn:nonalt2}  & = \frac{\xi_{2n}}{2} [d(K_n) + d(\overline{K}_n)]
\end{align} 
Both $K_n$ and $\overline{K}_n$ are geometrically maximal by Theorem \ref{thm:CKPthm}, so $\lim_{n \to \infty} d(K_n) = \lim_{n \to \infty} d(\overline{K}_n) = v_8$. Moreover, $\lim_{n \to \infty} \xi_{2n} = 1$, so (\ref{eqn:nonalt2}) implies that
$$\lim_{n \to \infty} d(K(n)) = v_8.$$




We show that $K(n)$ is non-alternating. Let $D$ be the diagram shown in Figure \ref{fig:Nonalternating}. Observe that $D$ is non-alternating. Since $W_{k,k}$ and $\overline{W}_{k,k}$ are strongly alternating (hence adequate) tangle diagrams, Proposition \ref{prop:adequate} implies that $D$ is adequate. Since an adequate diagram has the minimal number of crossings, there are no nugatory crossings in $D$. An adequate (hence minimal crossing) diagram of a split link must also be split. Since $D$ is non-split, it follows that $K(n)$ is non-split. Now $K(n)$ is hyperbolic when $n \geq 6$, as noted above. Therefore $K(n)$ is a prime link. It now follows from Theorem \ref{thm:prime} that the diagram $D$ is prime. Proposition \ref{prop:AdequateFacts} now implies that $K(n)$ is non-alternating. 
\end{proof}

\noindent \textit{Remark}: The sequence of links $\{K(n)\}_{n = 1}^\infty$ in the proof of Theorem \ref{thm:nonalt} is geometrically maximal and each link of the sequence contains a cycle of tangles.

\section{The Spectrum of Volume Densities} \label{sec:Spectrum}

We now turn to the proof that the set of volume densities is dense in $[0,v_8]$. In Lemma \ref{lem:MinimalKnot}, we prove the existence of tangles whose closures form a \textit{geometrically minimal} sequence of links, i.e. a sequence links with volume density approaching $0$. On the other hand, Lemma \ref{lem:MaximalTangle} produces a sequence of tangles whose closures form a geometrically maximal sequence of links. 

The idea of the proof of Theorem \ref{thm:spectrum} is to combine geometrically maximal tangles with geometrically minimal tangles via Conway sum. Key to the proof is the fact that hyperbolic volume is additive under belted sum (see Lemma \ref{lem:AdditiveVolume}), and that the crossing number of adequate tangles is additive under Conway sum (see Corollary \ref{cor:additivity}). The ratio of geometrically maximal to geometrically minimal tangles may then be controlled so that one may find a sequence of links with volume density approaching any number in the interval $[0, v_8]$.

\begin{figure}
\def\svgwidth{250pt}
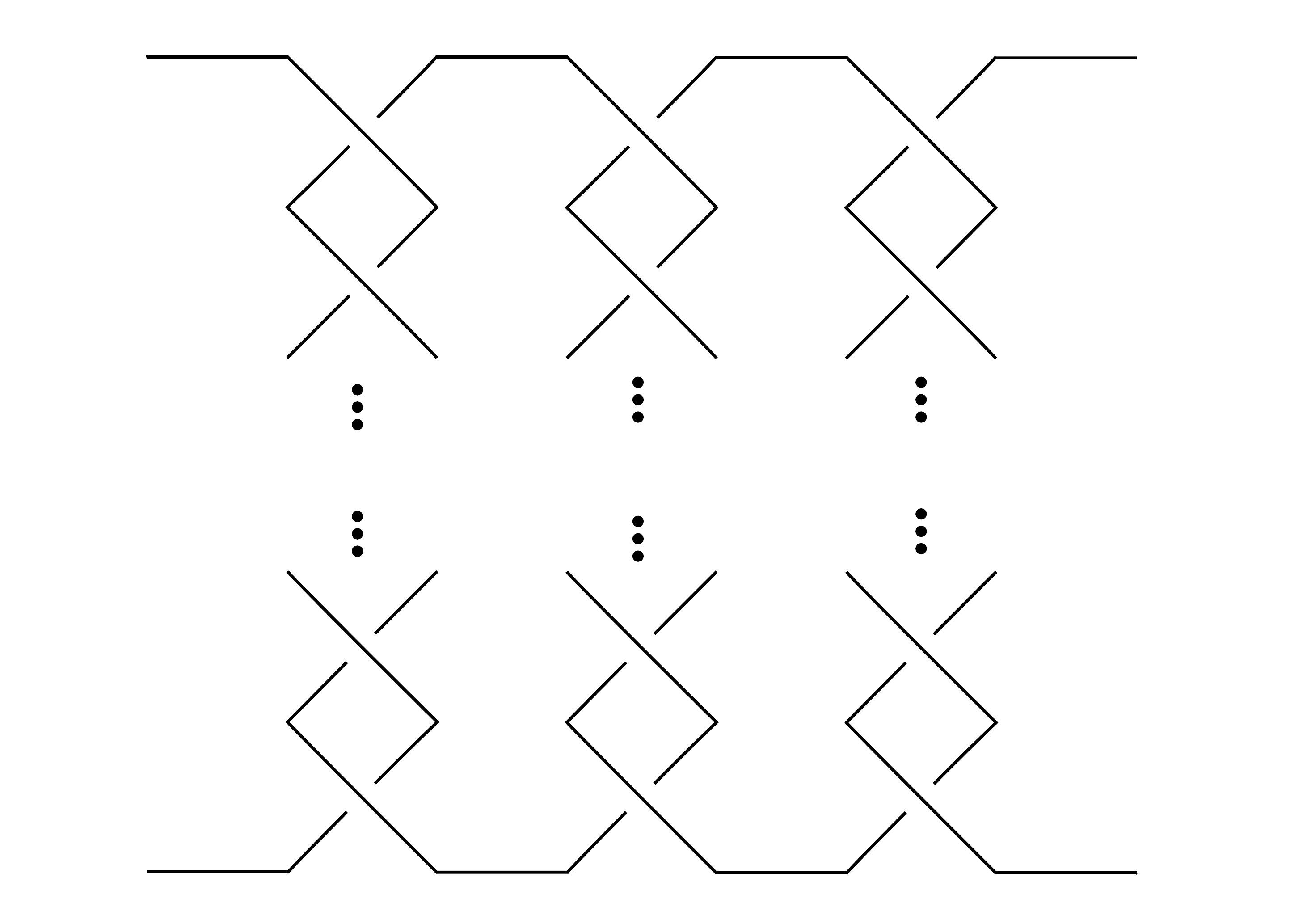
\caption{The pretzel tangle $P_{\ell,m,n}$ has $\ell$, $m$, and $n$ crossings in the respective twist regions.}
\label{fig:PretzelTangle}
\end{figure}

\begin{lemma}\label{lem:MinimalKnot}
There exists a sequence of strongly alternating tangles $\{T_m\}_{m = 1}^\infty$ such that $$\displaystyle \lim_{m \to \infty} d_{\emph{vol}}(N(T_m)) = 0$$ and $N(T_m)$ is hyperbolic for all $m$.
\end{lemma}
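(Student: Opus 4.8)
The plan is to construct a sequence of strongly alternating tangles whose natural closures are hyperbolic links with volume density tending to zero. Since volume density is $\mathrm{vol}(N(T_m))/c(N(T_m))$, I want the crossing number $c(N(T_m))$ to grow while the volume $\mathrm{vol}(N(T_m))$ stays bounded (or grows much more slowly). The natural candidates are links built out of long twist regions: twisting adds many crossings but contributes essentially no volume, because a twist region corresponds to a Seifert-fibered or thin piece of the complement whose volume is controlled. Concretely, I would take $T_m$ to be a pretzel tangle such as $P_{\ell,m,n}$ from Figure \ref{fig:PretzelTangle}, or more simply a tangle whose closure is a pretzel link $P(a_1,\dots,a_k)$, and let the number of crossings in the twist regions grow with $m$.

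First I would exhibit an explicit family. A clean choice is to let $N(T_m)$ be a pretzel link (or a tangle closure) containing $m$ twist regions each with a fixed small number of crossings, arranged so that the resulting diagram is reduced and alternating; then by Proposition \ref{prop:AdequateFacts}(1),(2) the diagram is adequate and realizes the crossing number, so $c(N(T_m))$ grows linearly in $m$. Next I would check hyperbolicity: for pretzel links this is classical (the relevant pretzel links are hyperbolic once enough tangles are present and no component is a connect-sum factor), and one can also verify it via the adequacy/primeness machinery already set up (Theorem \ref{thm:prime} gives primeness, and a pretzel link with at least three nontrivial tangles of the appropriate type is non-split and hyperbolic). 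I also need $T_m$ to be strongly alternating, which amounts to checking that both closures $N(T_m)$ and $D(T_m)$ are reduced alternating diagrams; a pretzel tangle of the form $P_{\ell,m,n}$ with each twist region alternating has this property by inspection.

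The heart of the argument is the volume bound $\mathrm{vol}(N(T_m)) \le C$ for a constant $C$ independent of $m$. I would obtain this from Theorem \ref{thm:VolumeDrops} (Thurston's Dehn-filling volume inequality): the links $N(T_m)$ with long twist regions are all Dehn fillings of a single fixed parent link complement obtained by replacing each twist region with a crossing circle (an extra unknotted component encircling the two strands). That parent manifold has some fixed finite volume $V$, and every $N(T_m)$ is obtained from it by Dehn filling the crossing-circle cusps, so $\mathrm{vol}(N(T_m)) < V$ for all $m$. Hence
\begin{equation}
d_{\mathrm{vol}}(N(T_m)) = \frac{\mathrm{vol}(N(T_m))}{c(N(T_m))} < \frac{V}{c(N(T_m))} \longrightarrow 0
\end{equation}
as $m \to \infty$, since $c(N(T_m)) \to \infty$.

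The main obstacle I anticipate is not the volume bound, which follows cleanly from the Dehn-filling picture, but rather simultaneously guaranteeing that each $N(T_m)$ is genuinely hyperbolic while the diagrams remain strongly alternating and prime with crossing number growing without bound. One must rule out degenerate cases where adding twists produces a non-prime link, a connect sum, or a Seifert-fibered piece (which would be non-hyperbolic); pretzel links with exactly two tangles, or with a twist region of one crossing, can fail to be hyperbolic. I would handle this by fixing the combinatorial type (e.g.\ a three-tangle pretzel shape $P_{\ell,m,n}$ with all three parameters at least $2$, or by keeping several twist regions of bounded length while letting their number $m$ grow), so that hyperbolicity holds uniformly and the adequacy results of Section 2 apply to pin down the crossing number exactly as the number of crossings visible in the alternating diagram.
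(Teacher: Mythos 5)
Your main line of argument---take the pretzel tangle $P_{\ell,m,n}$ with $\ell$ and $n$ fixed and $m\to\infty$, note that the diagram is strongly alternating and realizes the crossing number $\ell+m+n$, and bound the volume independently of $m$---is exactly the paper's proof. The only real difference is how the uniform volume bound is obtained: the paper cites the Lackenby--Agol--Thurston upper bound $\mathrm{vol}(K_m)\le 10v_3(\mathrm{tw}(K_m)-1)=20v_3$, since the number of twist regions is fixed at $3$, whereas you invoke Theorem \ref{thm:VolumeDrops} directly, viewing every $N(P_{\ell,m,n})$ as a Dehn filling of one fixed augmented parent link. Both work (and the second is essentially the proof of the first), though your route requires you to additionally justify that the augmented parent is hyperbolic of finite volume; the paper's citation sidesteps that. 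For hyperbolicity the paper is also more specific than your ``classical'' remark: it takes $\ell,n\ge 7$ and cites \cite[Theorem 1.2]{FKPtwists}.

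However, one of the two constructions you float is genuinely wrong and you should strike it: a link with $m$ twist regions \emph{each of bounded length}, with $m\to\infty$, does not have volume density tending to $0$. For such a family the crossing number grows linearly in $m$, but so does the volume---by the Lackenby--Agol--Thurston \emph{lower} bound for alternating links, $\mathrm{vol}$ is bounded below by a linear function of the twist number---so the density converges to a positive constant (for $P(3,3,\dots,3)$ it stays at least roughly $v_8/6$). Your Dehn-filling argument silently fails there too: the augmented parent acquires one crossing circle per twist region, so it is not a single fixed manifold and its volume grows with $m$. The construction only works because the number of twist regions is held fixed while their lengths grow; that is the point of fixing $\ell$ and $n$ and letting only $m$ increase.
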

\begin{proof}
Let $P_{\ell,m,n}$ be the pretzel tangle shown in Figure \ref{fig:PretzelTangle}. It is well-known that many pretzel links are hyperbolic. For example, whenever $\ell,m,n \geq 7$, work of Futer, Kalfagianni, and Purcell \cite[Theorem 1.2]{FKPtwists} implies that $N(P_{\ell,m,n})$ is hyperbolic. Moreover, $P_{\ell,m,n}$ is strongly alternating whenever $\ell, m$, and $n \geq 2$. Fix $\ell\geq 7$ and $n \geq 7$ and let $K_m = N(P_{\ell,m,n})$. By work of Lackenby, Agol, and D. Thurston \cite{LackenbyBound}, we know that $\text{vol}(K_m) \leq 10 v_3(\text{tw}(K_m)-1)$, where $\text{tw}(K_m)$ is the number of twist regions in a diagram of $K_m$. In particular, $\text{tw}(K_m) = 3$ for all values of $m$ so $\text{vol}(K_m) \leq 20 v_3$. It follows that 
$$ \lim_{m \to \infty} d_{\text{vol}}(K_m) = \lim_{m \to \infty} \frac{\text{vol}(K_m)}{c(K_m)} \leq \lim_{m \to \infty} \frac{20 v_3}{\ell + m + n} = 0$$ 
\end{proof}

\begin{lemma}\label{lem:MaximalTangle}
There exists a sequence of strongly alternating (hence adequate) tangles $\{T_n\}_{n =1}^\infty$ such that $\{N(T_n)\}_{n = 1}^\infty$ is geometrically maximal.
\end{lemma}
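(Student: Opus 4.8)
The plan is to take the tangles to be the weaving tangles $T_n = W_{n,n}$ of Definition \ref{def:WeavingTangle} and to show that their numerator closures $K_n := N(W_{n,n})$ --- the weaving knots of \cite{CKP} --- satisfy the hypotheses of Theorem \ref{thm:CKPthm}. The standard braid diagram of $W_{n,n}$ comes from the alternating word $\sigma_1\sigma_2\inv\cdots\sigma_{n-1}^{(-1)^{n-1}}$, so both of its closures $N(W_{n,n})$ and $D(W_{n,n})$ are reduced alternating diagrams; hence $W_{n,n}$ is strongly alternating (and so adequate), which already gives the first assertion of the lemma.

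First I would record the diagrammatic hypotheses of Theorem \ref{thm:CKPthm}. The standard diagram $D_n$ of $K_n$ is prime, alternating, and twist-reduced, and every twist region of $D_n$ consists of a single crossing. Consequently $D_n$ contains no knotty tangle, and in particular no visible Conway sphere bounding a knotty tangle on each side, so $D_n$ has no cycle of tangles.

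Next I would supply the F\o lner data. The $n(n-1)$ crossings of $D_n$ sit in the pattern of a grid lying on an annulus, which is locally isomorphic to the square lattice $G(\mathcal{W})$ away from the closure arcs. Taking $G_n \subseteq G(K_n)$ to be a maximal planar block of this grid, the graphs $G_n$ embed as nested, exhaustive subgraphs of $G(\mathcal{W})$; since $|\partial G_n|$ grows linearly in $n$ while $|G_n|$ grows quadratically, $\{G_n\}$ is a F\o lner sequence for $G(\mathcal{W})$. Because $c(K_n) = n(n-1)$ by Proposition \ref{prop:AdequateFacts} (the diagram is adequate) and $|G_n| \sim n^2$, we also obtain $|G_n|/c(K_n) \to 1$. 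Theorem \ref{thm:CKPthm} then shows that $\{K_n\}$ is geometrically maximal, completing the proof.

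The step demanding the most care is the construction of the F\o lner exhaustion: one must identify the weaving-knot projection with a genuine planar piece of the square lattice and verify the boundary-to-interior ratio tends to zero. This is essentially the content of the weaving-knot examples in \cite{CKP}, and the same knots $N(W_{n,n})$ are already invoked as geometrically maximal in the proof of Theorem \ref{thm:nonalt}; so the essential geometric work has been done, and the remaining task is only to repackage the weaving tangles as the desired sequence of strongly alternating tangles.
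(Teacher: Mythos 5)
Your proposal matches the paper's proof: both take $T_n = W_{n,n}$ and observe that the closures are reduced alternating diagrams (so the tangles are strongly alternating), the paper simply citing \cite[Theorem 1.9]{CKP} for geometric maximality of the weaving knots where you instead sketch a verification of the hypotheses of Theorem \ref{thm:CKPthm} --- which is exactly how that cited result is established in \cite{CKP}. The only caveat worth noting is that the paper restricts the strong-alternation claim to $n \geq 4$, where the closures are genuinely reduced.
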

\begin{proof}
Consider the weaving tangles $W_{m,n}$ from Definition \ref{def:WeavingTangle} and depicted in Figure \ref{fig:WeavingTangle}. Let $T_n = W_{n,n}$. It was shown in \cite[Theorem 1.9]{CKP} that $\{N(T_n)\}_{n = 1}^\infty$ is a geometrically maximal sequence of links. Note that $N(T_n)$ and $D(T_n)$ have reduced, alternating diagrams, whenever $n \geq 4$.
\end{proof}

We have established examples of geometrically minimal and geometrically maximal sequences of adequate links. We now use these to prove the following theorem, which is identical to Theorem \ref{thm:IntroSpectrumVol}.

\begin{theorem}\label{thm:spectrum}
The set $\mathcal{C}_{\emph{vol}}$ of volume densities of hyperbolic links is a dense subset of $[0,v_8]$, and $\emph{Spec}_{\emph{vol}} = [0,v_8]$. In other words, given $x \in [0,v_8]$ there exists a sequence $\{K_n\}_{n = 1}^\infty$ of hyperbolic links such that the volume densities satisfy $\displaystyle\lim_{n \to \infty} d_{\emph{vol}}(K_n) = x$.
\end{theorem}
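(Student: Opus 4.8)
The plan is to interpolate between the geometrically minimal tangles of Lemma \ref{lem:MinimalKnot} and the geometrically maximal tangles of Lemma \ref{lem:MaximalTangle} by forming Conway sums in a controlled ratio. Fix a target $x \in [0,v_8]$; the endpoints $x=0$ and $x=v_8$ are handled directly by Lemmas \ref{lem:MinimalKnot} and \ref{lem:MaximalTangle}, so I may assume $x \in (0,v_8)$ and set $r = x/v_8 \in (0,1)$. For a strongly alternating tangle $S$ that is not an east-west twist, write $c(S) = c(N(S))$ for the crossing number of its numerator closure and $V(S) = \text{vol}(B(S))$ for the volume of its belted link. The quantity I will track throughout is the \emph{belted density} $V(S)/c(S)$.

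The heart of the argument is a sandwiching estimate for a Conway sum $T = T_1 + \cdots + T_k$ of such tangles with $k \ge 12$. Let $L = B(T_1) + \cdots + B(T_k)$ be the corresponding belted sum. Crossing number is additive (Corollary \ref{cor:additivity}), so $c(N(T)) = \sum_i c(T_i)$, and volume is additive under belted sum (Lemma \ref{lem:AdditiveVolume}), so $\text{vol}(L) = \sum_i V(T_i)$. Theorem \ref{thm:11.524} supplies the lower bound $\text{vol}(N(T)) \ge \xi_k\,\text{vol}(L)$, while the fact that $N(T)$ is obtained from $L$ by Dehn filling the belt components, combined with Theorem \ref{thm:VolumeDrops}, supplies the matching upper bound $\text{vol}(N(T)) < \text{vol}(L)$. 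Together these give
\begin{equation*}
\xi_k \, \frac{\sum_i V(T_i)}{\sum_i c(T_i)} \;\le\; d_{\text{vol}}(N(T)) \;<\; \frac{\sum_i V(T_i)}{\sum_i c(T_i)}.
\end{equation*}
Since $\xi_k \to 1$, as $k \to \infty$ the volume density of $N(T)$ is forced to the crossing-weighted average of the belted densities of the summands.

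It remains to compute the belted densities of the two families and to tune the mixture. For the weaving tangles $W_{n,n}$ of Lemma \ref{lem:MaximalTangle}, Dehn filling gives $V(W_{n,n}) > \text{vol}(N(W_{n,n}))$, so $V(W_{n,n})/c(W_{n,n}) > d_{\text{vol}}(N(W_{n,n})) \to v_8$; on the other hand the belted diagram has only $c(W_{n,n}) + O(1)$ crossings, so D. Thurston's octahedral bound forces $V(W_{n,n})/c(W_{n,n}) \to v_8$ as well. For the pretzel tangles $P_{7,s,7}$ of Lemma \ref{lem:MinimalKnot}, the belted link has a bounded number of twist regions, hence bounded volume, while $c(P_{7,s,7}) = s+14 \to \infty$, so its belted density tends to $0$. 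For each $n \ge 6$ let $K(n) = N(T(n))$, where $T(n)$ is the Conway sum of $n$ copies of $W_{n,n}$ and $n$ copies of $P_{7,s_n,7}$, with $s_n$ chosen so that $c(P_{7,s_n,7})/c(W_{n,n}) \to (1-r)/r$ (possible since $c(W_{n,n}) = n(n-1) \to \infty$). Then $T(n)$ has $2n \ge 12$ summands, all strongly alternating and none an east-west twist, so the sandwiching applies; the $n$ copy-counts cancel, and the crossing-weighted average of belted densities tends to $r\,v_8 + (1-r)\cdot 0 = x$. Hence $d_{\text{vol}}(K(n)) \to x$. Since every point of $[0,v_8]$ arises as such a limit while $\mathcal{C}_{\text{vol}} \subseteq [0,v_8]$, we conclude $\overline{\mathcal{C}_{\text{vol}}} = [0,v_8]$ and $\text{Spec}_{\text{vol}} = [0,v_8]$.

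I expect the main obstacle to be justifying the upper half of the sandwiching, namely that $N(T)$ is a Dehn filling of the belted sum $L$ so that Theorem \ref{thm:VolumeDrops} applies. The proofs of Theorems \ref{thm:CycleEasy} and \ref{thm:nonalt} needed only the universal bound $d_{\text{vol}} \le v_8$ because their target was $v_8$ itself, whereas realizing intermediate values of $x$ genuinely requires this reverse inequality, together with the computation that the belted densities of the weaving tangles converge to $v_8$.
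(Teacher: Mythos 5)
Your proposal is correct and follows essentially the same route as the paper's proof: Conway sums of pretzel and weaving tangles in a controlled ratio, with the lower bound coming from Theorem \ref{thm:11.524}, Lemma \ref{lem:AdditiveVolume} and Theorem \ref{thm:VolumeDrops}, the upper bound from Dehn filling plus the Lackenby--Agol--Thurston twist-region bound, and crossing-number additivity from Corollary \ref{cor:additivity}. The only differences are cosmetic: you tune the mixture by driving the ratio $c(P_{7,s_n,7})/c(W_{n,n})$ to $(1-r)/r$ with equal copy counts, whereas the paper matches the crossing numbers exactly and varies the integer copy counts $a,b$; and you bound $\mathrm{vol}(B(W_{n,n}))$ directly by $v_8$ times the crossing number of its belted diagram, which is slightly cleaner than the paper's detour through the auxiliary link $K_2^n$ and a second application of Theorem \ref{thm:11.524}.
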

\begin{proof}
It suffices to show that for any non-negative integers $a$ and $b$ with $a$ and $b$ not both zero, and any $\epsilon > 0$, there exists a link $K$ such that 
$$ \frac{b}{a+b} v_8 - \epsilon < d_{\text{vol}}(K) < \frac{b}{a+b}v_8 + \epsilon.$$
Let $\epsilon > 0$ be arbitrary. Let  $P_{\ell,m,n}$ be the pretzel tangles from Lemma \ref{lem:MinimalKnot}. Let $\widetilde{T}_1 = P_{7,\widetilde{m},7}$ and $\widetilde{K}_1 = N(\widetilde{T}_1)$, where $\widetilde{m}$ is chosen to be large enough that
\begin{equation}\label{size_of_m}
\frac{ 40\, a \, v_3}{(a + b)(\widetilde{m} + 14)} < \frac{\epsilon}{2}
\end{equation}
In Lemma \ref{lem:MaximalTangle}, it was pointed out that the sequence of links $\{N(W_{k,k})\}_{k = 1}^\infty$, where $W_{k,k}$ is the weaving tangle, is a geometrically maximal sequence of links. Therefore we may choose $T_2 = W_{k,k}$ and $K_2 = N(T_2)$ with $k$ sufficiently large that 
\begin{equation}\label{maximal_size}
\frac{\vol[K_2]}{c(K_2)} > v_8 - \left(\frac{a+b}{b}\right)\frac{\epsilon}{2} \quad \text{ and } \quad c(K_2) \geq c(\widetilde{K}_1)
\end{equation}
Since $\widetilde{K}_1$ and $K_2$ have reduced alternating diagrams, by counting the number of crossings in these diagrams, we see that 
\begin{equation}
k(k-1) = c(K_2) \geq c(\widetilde{K}_1) = \widetilde{m} + 14.
\end{equation}
Let $m = k(k-1) - 14$. Define $T_1 = P_{7,m,7}$ and $K_1 = N(T_1)$. This choice of $m$ implies that
\begin{equation}
c(K_1) = c(K_2)
\end{equation}
Observe that $m \geq \widetilde{m}$, $\text{tw}(K_1) = 3$ and $c(K_1) = m + 14$, so (\ref{size_of_m}) guarantees that
\begin{equation}\label{minimal_size}
\frac{10\,a\, v_3(\text{tw}(K_1) + 1)}{(a+b)\,c(K_1)} = \frac{ 40\, a \, v_3}{(a + b)(m + 14)}\leq  \frac{ 40\, a \, v_3}{(a + b)(\widetilde{m} + 14)} < \frac{\epsilon}{2}
\end{equation}
Choose $n \geq 12$ so that $\xi_n$ as defined in (\ref{def:xi}) satisfies
\begin{equation}\label{n_size}
\xi_n > \max \left\{ 1-  \frac{(a + b)\epsilon}{2\, b\,v_8 -\epsilon(a+b)}, \frac{2\,b\,v_8}{2\,v_8 + \epsilon(a+b)} \right\}
\end{equation}
Since $0 < \xi_n < 1$, we need to check that such a choice of $n$ is possible. Since $a$, $b$, $v_8$, and $\epsilon$ are positive we know that $$0 < \frac{2bv_8}{2bv_8 + \epsilon(a+b)} < 1$$
Moreover, as long as $\epsilon < 2\,\,v_8/(a+b)$ we have $2\,b\,v_8 > \epsilon(a+b)$, hence $$1-  \frac{(a + b)\,\epsilon}{2\, b\,v_8 -\epsilon(a+b)}< 1$$
Let $L_1 = B(T_1)$ and $L_2 = B(T_2)$ be the belted links corresponding to $T_1$ and $T_2$ respectively.  Form the link $L$ by taking the belted sum of $a\cdot n$ copies of $L_1$ with $b\cdot n$ copies of $L_2$. Let $K$ be the result of filling the belt of $L$ via the meridional Dehn filling. Note that $K$ is simply the result of taking the closure of the tangle sum of $a\cdot n$ copies of $T_1$ with $b \cdot n$ copies of $T_2$ as shown in Figure \ref{fig:VolumeSpectrum}. 
\begin{figure}
\def\svgwidth{450pt}
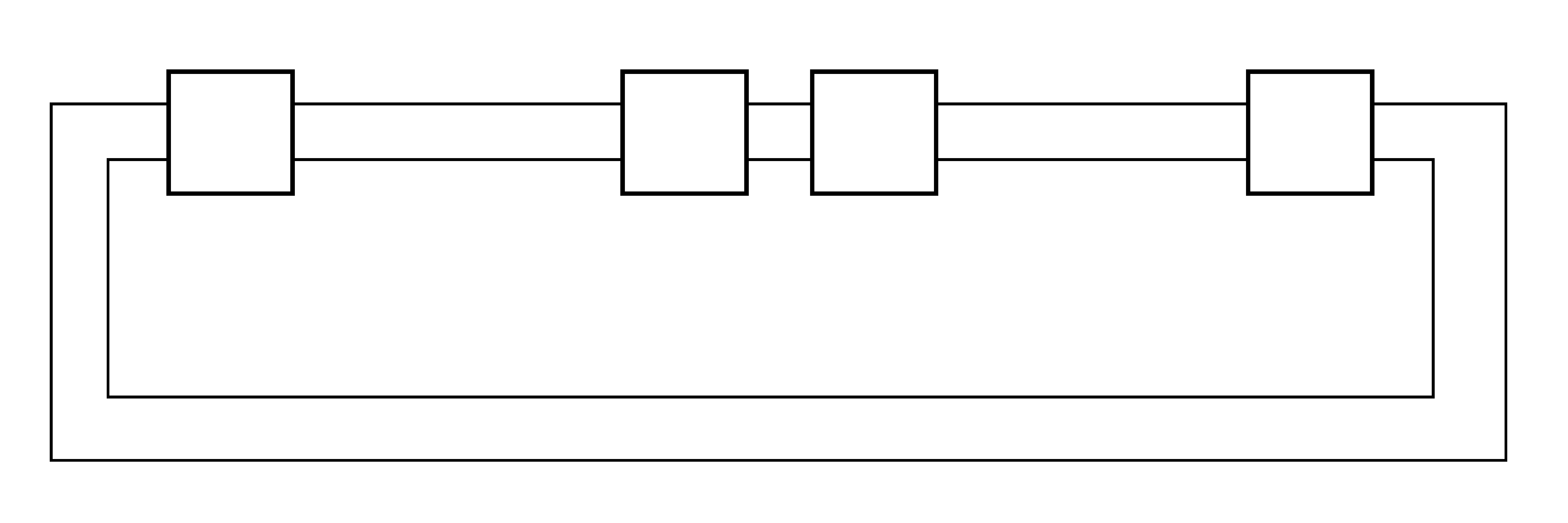
\caption{The link $K$ is the closure of the tangle sum of $a \cdot n$ copies of $T_1$ with $b \cdot n$ copies of $T_2$ shown above.}
\label{fig:VolumeSpectrum}
\end{figure}
We obtain
\begin{align}
\vol[K] & \geq \xi_n \vol[L] && \text{by Theorem \ref{thm:11.524} }\\
& = \xi_n(a\cdot n \cdot \vol[L_1] + b\cdot n \cdot \vol[L_2])&& \text{by Lemma \ref{lem:AdditiveVolume}} \\
\label{eqn:lower} & > \xi_n(a\cdot n \cdot \vol[K_1] + b\cdot n \cdot \vol[K_2]) && \text{by Theorem \ref{thm:VolumeDrops}}
\end{align}
Since $T_1$ and $T_2$ are strongly alternating tangles, it follows from Corollary \ref{cor:additivity} that 
\begin{equation}\label{eqn:xing}
c(K) = n\cdot a \cdot c(K_1) + n\cdot b \cdot c(K_2)
\end{equation} This produces the following inequalities to form a lower bound for the density $d_{\text{vol}}(K)$.
\begin{align*}
d(K)_{\text{vol}} = \frac{\vol[K]}{c(K)} & > \frac{\xi_n( a\cdot n \cdot \vol[K_1] + b\cdot n \cdot \vol[K_2])}{c(K)} && \text{by (\ref{eqn:lower})}\\
& = \frac{\xi_n( a\cdot n\cdot \vol[K_1] + b\cdot n\cdot \vol[K_2])}{a\cdot n\cdot c(K_1) + b\cdot n\cdot c(K_2)} && \text{by (\ref{eqn:xing})}\\
& = \xi_n \frac{\vol[K_1]}{c(K_1)} \frac{a}{a+b} + \xi_n \frac{\vol[K_2]}{c(K_2)}\frac{b}{a+b} && \text{since } c(K_1) = c(K_2)\\
& \geq \xi_n \frac{\text{vol}(K_2)}{c(K_2)}\frac{b}{a+b}\\
& > \left(1 - \frac{\epsilon(a+b)}{2\,b\, v_8 - \epsilon(a+b)} \right) \left( v_8 - \frac{a+b}{b}\frac{\epsilon}{2} \right)\frac{b}{a+b} && \text{by (\ref{maximal_size}) and (\ref{n_size}}) \\
\intertext{Which after some algebra simplifies to}
& = \left(1 - \frac{\epsilon(a+b)}{2\,b\, v_8 - \epsilon(a+b)}  \right) \left( v_8 \frac{b}{a+b} - \frac{\epsilon}{2}\right)\\
& = v_8 \frac{b}{a+b} - \frac{\epsilon\, v_8\, b}{2\, b\, v_8 - \epsilon(a+b)} + \frac{\epsilon^2 (a+b)}{2(2\,b\,v_8 - \epsilon(a+b))} - \frac{\epsilon}{2}\\
& = v_8 \frac{b}{a+b} + \frac{\epsilon}{2} \left( \frac{-2\,v_8\, b}{2\,b\,v_8 - \epsilon(a+b)} + \frac{\epsilon(a+b)}{2\,b\,v_8 - \epsilon(a+b)}\right) - \frac{\epsilon}{2}\\
& = v_8 \frac{b}{a+b} - \epsilon
\end{align*}

We now find an upper bound for $d_{\text{vol}}(K)$. Since $K$ is obtained from $L$ by Dehn filling, Theorem \ref{thm:VolumeDrops} implies that $\text{vol}(K) < \text{vol}(L)$. Moreover, Lemma \ref{lem:AdditiveVolume} implies that $\text{vol}(L) = a \cdot n \cdot \text{vol}(L_1) + b \cdot n \cdot\text{vol}(L_2)$. Therefore  
\begin{equation}\label{SplitInTwo}
\vol[K] < a\cdot n\cdot \vol[L_1] + b\cdot n\cdot \vol[L_2]
\end{equation}
It was shown by Lackenby, Agol, and D. Thurston \cite{LackenbyBound} that $a\cdot n\cdot \vol[L_1]< 10\, a\cdot n\cdot v_3(\text{tw}(L_1) - 1)$. Consider the diagrams of $L_1$ and $K_1$. Adding the belt circle to $K_1$ to form $L_1$ adds no more than two twist regions, as indicated in Figure \ref{fig:BeltCircleTwistRegions}. 
\begin{figure}
\def\svgwidth{250pt}
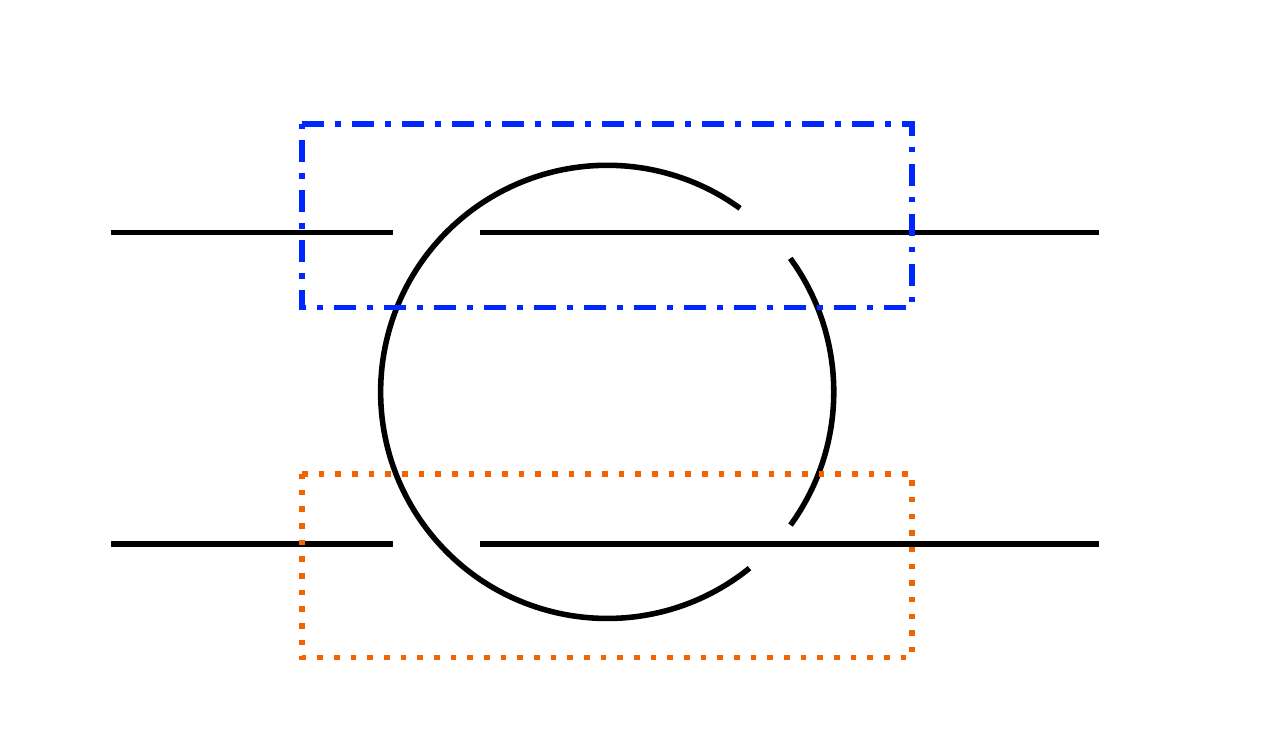
\caption{The belt circle adds no more than the two twist regions indicated in the figure.}
\label{fig:BeltCircleTwistRegions}
\end{figure}
Therefore $\text{tw}(L_1) \leq \text{tw}(K_1) + 2$, hence 
\begin{equation}\label{LeftUpperBound}
a\cdot n\cdot \vol[L_1] < 10\, a\, n\, v_3 (\text{tw}(K_1) + 1)
\end{equation}
Define $L_2^n$ to be the belted sum of $n$ copies of $L_2$ and let $K_2^n$ be the result of filling the belt of $L_2^n$ via the meridional filling. Then $K_2^n = N(T_2 + \hdots + T_2)$ where $+$ denotes Conway sum and the sum consists of $n$ copies of $T_2$ (see Figure \ref{fig:K2n}).
\begin{figure}
\def\svgwidth{250pt}
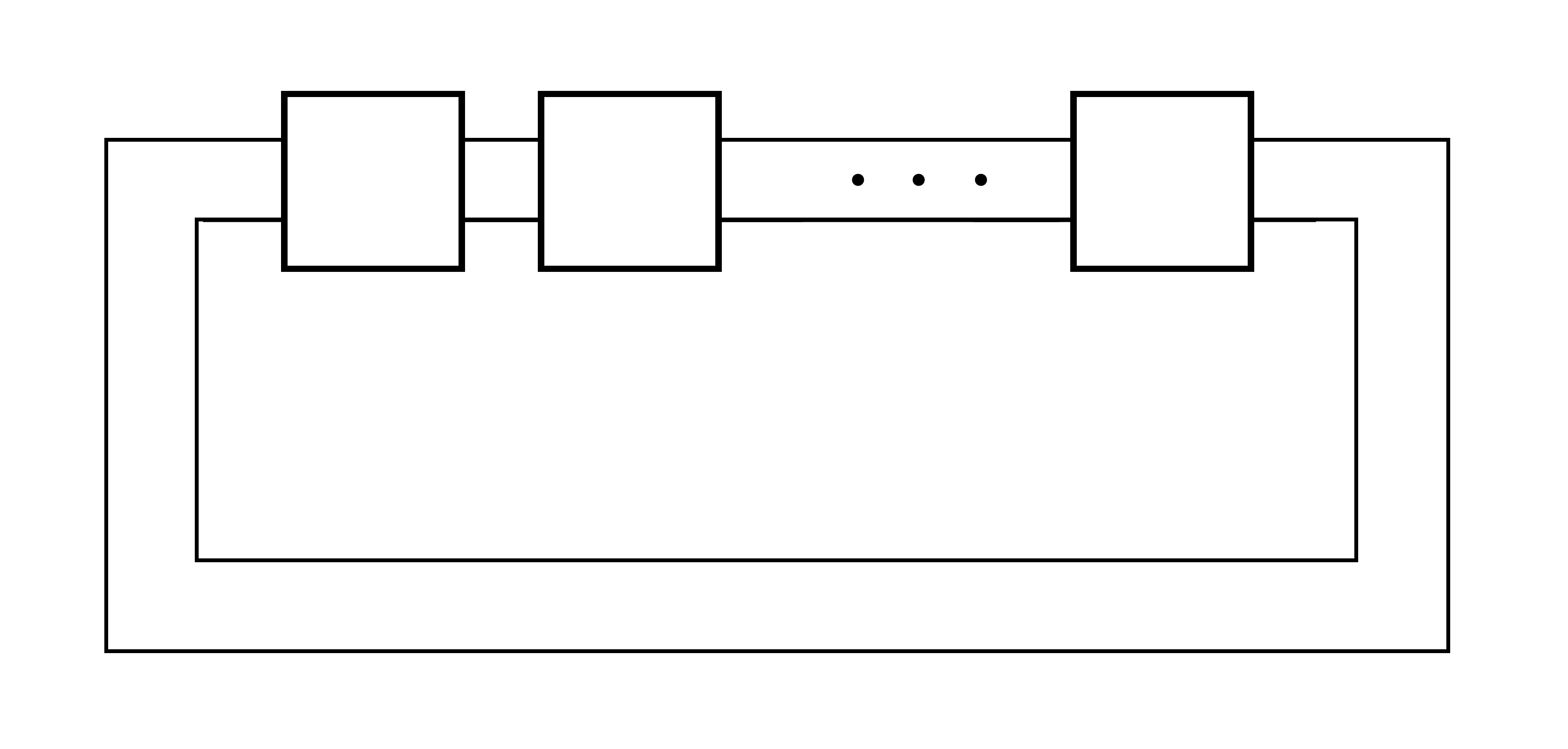
\caption{The link $K_2^n$ is the closure of the sum of $n$ copies of $T_2$ as indicated above.}
\label{fig:K2n}
\end{figure}
Then Lemma \ref{lem:AdditiveVolume} implies $\text{vol}(L_2^n) = n \, \text{vol}(L_2)$. Theorem \ref{thm:11.524} implies $\text{vol}(L_2^n) \leq \text{vol}(K_2^n)/\xi_n$. Therefore
 \begin{equation}\label{eqn:summary1}
b\cdot n\cdot \vol[L_2] 
= b \cdot \vol[L_2^n]
\leq \frac{b}{\xi_n}\vol[K_2^n]
\end{equation}
Since $v_8$ forms an upper bound on volume density, we know that $\text{vol}(K_2^n) \leq v_8\, c(K_2^n) $. It follows that 
\begin{equation}\label{eqn:summary2}
\frac{b}{\xi_n}\,\vol[K_2^n] \leq \dfrac{b}{\xi_n}\, v_8\, c(K_2^n)
\end{equation}
The diagram for $K_2^n$ shown in Figure \ref{fig:K2n} has $n\cdot c(K_2)$ crossings. Therefore $c(K_2^n) \leq n\cdot c(K_2)$. This yields
\begin{equation}\label{eqn:summary3}
\dfrac{b}{\xi_n}\, v_8\, c(K_2^n) \leq \frac{b\, n\, v_8}{\xi_n}\,c(K_2)
\end{equation}
Summarizing (\ref{eqn:summary1}), (\ref{eqn:summary2}), and (\ref{eqn:summary3}) we see that
\begin{equation}\label{RightUpperBound}
b\cdot n\cdot \vol[L_2] \leq \frac{b\, n\, v_8}{\xi_n}c(K_2).
\end{equation}

\noindent This allows us to produce an upper bound on the volume density $d(K)$ as follows.
\begin{align*}
d_{\text{vol}}(K) = \frac{\vol[K]}{c(K)} & < \frac{a\cdot n\cdot \vol[L_1] + b\cdot n\cdot \vol[L_2]}{c(K)} && \text{by (\ref{SplitInTwo})}\\
& = \frac{a\cdot n\cdot \vol[L_1] + b\cdot n\cdot \vol[L_2]}{a \cdot n\cdot c(K_1) + b\cdot n\cdot c(K_2)} && \text{by (\ref{eqn:xing})}\\
& < \frac{10\, a\, n\, v_3(\text{tw}(K_1) + 1) + \frac{b}{\xi_n}(n\, v_8\,  c(K_2))}{a \cdot n\cdot c(K_1) + b\cdot n\cdot c(K_2)} && \text{by (\ref{LeftUpperBound}) and (\ref{RightUpperBound})}\\
& = \frac{10\, a\,v_3(\text{tw}(K_1) + 1)}{(a+b)c(K_1)}+ \frac{b\, v_8\, c(K_2)}{\xi_n(a+b)\,c(K_2)} && \text{since } c(K_1) = c(K_2)\\
& < \frac{\epsilon}{2} + \frac{1}{\xi_n} \frac{bv_8}{a+b} && \text{by (\ref{minimal_size})}\\
& < \frac{\epsilon}{2} + \frac{2bv_8 + \epsilon(a+b)}{2bv_8} \frac{bv_8}{a+b} && \text{by (\ref{n_size})}\\
& = \frac{\epsilon}{2} + \frac{2bv_8 + \epsilon(a+b)}{2(a+b)}\\
& = \frac{b}{a+b}v_8 + \epsilon
\end{align*}
Therefore 
$$\frac{b}{a+b}v_8 - \epsilon < d_{\text{vol}}(K) < \frac{b}{a+b}v_8 + \epsilon$$ as desired.

\end{proof}

\section{The Spectrum of Determinant Densities}

We now turn to studying the spectrum of determinant densities. The proof that $\text{Spec}_{\det}$ contains $[0,v_8]$ will follow a similar method as the proof of Theorem \ref{thm:spectrum}. Namely, we note that there exist sequences of diagrammatically maximal links with determinant density approaching $v_8$, and there exist sequences of \textit{diagrammatically minimal} links, i.e. those with determinant density near 0. We combine these diagrams via Conway sum, but instead of taking the ``$N$" closure as in the proof of Theorem \ref{thm:spectrum}, we consider the ``$D$" closure. We will then use the facts that $\log \det$ is additive under the ``$D$" closure of Conway sums (see Lemma \ref{lem:MultiplicativeDeterminant}), and that the crossing number is additive under Conway sums of adequate diagrams (see Corollary \ref{cor:additivity}). Then by controlling the ratio of diagrammatically minimal links to diagrammatically maximal links, we may obtain a link that has determinant density near any number $x \in [0,v_8]$.
\begin{lemma}\label{lem:DiagramMinimal}
There exists a sequence of adequate tangles $\{T_m\}_{m = 1}^\infty$ such that the sequence of closures $\{D(T_m)\}_{m =1}^\infty$ satisfies $\lim_{n \to \infty} d_{\det}(D(T_n)) = 0$.
\end{lemma}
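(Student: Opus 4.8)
The plan is to reuse the pretzel tangles $P_{\ell,m,n}$ introduced in Lemma \ref{lem:MinimalKnot}, which played the analogous role for volume density, and to exploit the fact that the determinant of a pretzel link grows only polynomially in its twist parameters while the crossing number grows linearly. Fix $\ell, n \geq 2$ and set $T_m = P_{\ell,m,n}$. As recorded in the proof of Lemma \ref{lem:MinimalKnot}, the tangle $P_{\ell,m,n}$ is strongly alternating, and hence adequate, whenever $\ell, m, n \geq 2$; in particular both $N(T_m)$ and $D(T_m)$ are reduced, alternating diagrams, so there is no issue of splitness or nugatory crossings to worry about. Note that for this lemma we do \emph{not} need the links to be hyperbolic, so we need not impose the stronger condition $\ell,m,n \geq 7$ that Lemma \ref{lem:MinimalKnot} used.

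The first real step is to pin down the link $D(T_m)$ and bound its determinant. The denominator closure $D(P_{\ell,m,n})$ is an alternating pretzel-type link, and its determinant may be computed directly; for instance via the Goeritz matrix of the checkerboard (Tait) graph, equivalently as the number of spanning trees of that graph, or from the well-known formula $\det P(\ell,m,n) = |\ell m + m n + n\ell|$ (one could also route through the Conway/Alexander polynomial exactly as in Lemma \ref{lem:MultiplicativeDeterminant}). However it is obtained, the key output is a polynomial bound of the form $\det(D(T_m)) \leq C\,(\ell + m + n)^2$ for a constant $C$ independent of $m$; that is, the determinant is at most polynomial in the number of crossings rather than exponential.

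Next, because $T_m$ is adequate, Proposition \ref{prop:AdequateFacts}(2) guarantees that the adequate diagram of $D(T_m)$ with $\ell + m + n$ crossings realizes the crossing number, so $c(D(T_m)) = \ell + m + n$. Combining the two facts gives
\begin{equation*}
d_{\det}(D(T_m)) = \frac{2\pi \log \det(D(T_m))}{c(D(T_m))} \leq \frac{2\pi \log\!\big(C(\ell+m+n)^2\big)}{\ell + m + n},
\end{equation*}
and since the numerator grows like $\log m$ while the denominator grows like $m$, the right-hand side tends to $0$ as $m \to \infty$. This yields $\lim_{m\to\infty} d_{\det}(D(T_m)) = 0$, as desired.

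The only substantive step is the determinant estimate in the second paragraph: one must correctly identify the isotopy type of the denominator closure $D(P_{\ell,m,n})$ and confirm that its determinant is polynomially (not exponentially) bounded in the twist parameters. Everything else — the crossing-number bookkeeping from Proposition \ref{prop:AdequateFacts} and the final limit — is routine. I expect the cleanest route to the polynomial bound to be the spanning-tree interpretation of the determinant of a reduced alternating diagram, since the Tait graph of a pretzel diagram has a bounded number of vertices once the parallel edges of each twist region are grouped, which forces the spanning-tree count to be polynomial in the edge multiplicities.
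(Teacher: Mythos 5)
Your proposal is correct and follows essentially the same route as the paper, which takes $T_m = P_{3,m,3}$, cites $\det(D(T_m)) = 6m+9$ (a special case of your formula $|\ell m + mn + n\ell|$), and concludes from the logarithmic growth of the numerator against the linear growth of the crossing number. Your slightly more general polynomial bound via the spanning-tree count of the Tait graph is a valid substitute for the paper's citation of the exact determinant value.
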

\begin{proof}
Let $P_{\ell,m,n}$ be the pretzel tangles shown in Figure \ref{fig:PretzelTangle}. Let $T_m = P_{3,m,3}$ and $K_m = D(T_m)$. Then the tangles are strongly alternating, hence adequate. It follows from \cite[Page 100]{Lickorish} (see also work of Dasbach, Kalfagianni, Futer, Lin, and Stoltzfus \cite[Example 4.3]{Dasbach}\footnote{The author thanks Jessica Purcell for pointing out this reference.}) that $\det(K_m) = 6m + 9$. Now
\begin{align*}
d_{\det}(K_m) = \lim_{m \to \infty} \frac{2 \pi \log \det(K_m)}{c(K_m)} = \lim_{m \to \infty} \frac{2 \pi \log(6m + 9)}{m + 6} = 0
\end{align*}
\end{proof}

\begin{lemma}\label{DiagramMaximal}
There exists a sequence of adequate tangles $\{T_m\}_{m = 1}^\infty$ such that the sequence of closures $\{D(T_m)\}_{m =1}^\infty$ satisfies $\lim_{n \to \infty} d_{\det}(D(T_n)) = v_8$.
\end{lemma}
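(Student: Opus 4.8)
The plan is to reuse the weaving tangles and invoke the diagrammatic analogue of the Champanerkar--Kofman--Purcell criterion, Theorem \ref{thm:CKPthm2}. I would set $T_m = W_{m,m}$, the weaving tangle of Definition \ref{def:WeavingTangle}. For $m \geq 4$ this tangle is strongly alternating, hence adequate, and both of its closures are reduced alternating diagrams; this is the same tangle used in Lemma \ref{lem:MaximalTangle}. The essential difference is that here I must work with the closure $D(W_{m,m})$ rather than $N(W_{m,m})$, since the determinant spectrum construction (Theorem \ref{thm:spectrum2}) multiplies determinants across $D$-closures via Lemma \ref{lem:MultiplicativeDeterminant}. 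The goal is to show that $\{D(W_{m,m})\}_{m=1}^\infty$ is diagrammatically maximal, i.e. that $\lim_{m \to \infty} d_{\det}(D(W_{m,m})) = v_8$.

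The key geometric observation is that $D(W_{m,m})$ is again a weaving-type diagram. It is obtained from the braid $L_{m,m}$ by joining the two free strands in the manner complementary to the one that produces the weaving knot $N(W_{m,m})$, so the two closures agree except in a bounded neighborhood of the four tangle endpoints. Consequently the projection graph $G(D(W_{m,m}))$ is, away from its boundary, a finite piece of the square lattice $G(\mathcal{W})$. Equivalently, using $D(T) = N(T^{\mathrm{rot}})$, a $90^\circ$ rotation of $W_{m,m}$ is again a weaving-type tangle because the underlying pattern is the square grid and a $90^\circ$ rotation preserves both the lattice and its alternating structure.

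With this in hand the execution is straightforward. First I would record the crossing count: the braid $L_{m,m}$ has $m(m-1)$ crossings and the closure arcs introduce none, so since $D(W_{m,m})$ is adequate, Proposition \ref{prop:AdequateFacts} gives $c(D(W_{m,m})) = m(m-1)$. Next I would take $G_m \subseteq G(D(W_{m,m}))$ to be the subgraph of interior crossings filling out an honest square-lattice block; these satisfy $|G_m| = \Theta(m^2)$ and $|\partial G_m| = O(m)$, so $|\partial G_m|/|G_m| \to 0$ and the $G_m$ form a F\o lner sequence for $G(\mathcal{W})$. Since $c(D(W_{m,m})) = \Theta(m^2)$ as well, we obtain $|G_m|/c(D(W_{m,m})) \to 1$. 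Both hypotheses of Theorem \ref{thm:CKPthm2} then hold, and that theorem yields the diagrammatic maximality of $\{D(W_{m,m})\}$, completing the proof.

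The main obstacle is the bookkeeping underlying the second and third paragraphs: making precise that the $D$-closure, rather than the $N$-closure that yields the known weaving knots, still presents a projection graph whose interior is genuinely the square lattice, and that the differing closure arcs disturb only $O(m)$ of the $\Theta(m^2)$ crossings so that both the F\o lner condition and the density-one condition survive. Once the diagram of $D(W_{m,m})$ is drawn and its boundary explicitly identified, this is routine, and everything else reduces to a direct application of Theorem \ref{thm:CKPthm2}.
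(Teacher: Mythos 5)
Your proposal is correct and follows essentially the same route as the paper: the paper's proof simply takes $T_m = W_{m,m}$ and asserts that $\{D(W_{m,m})\}$ satisfies the hypotheses of Theorem \ref{thm:CKPthm2}, hence is diagrammatically maximal. You supply more of the verification (adequacy, the crossing count, and the F\o lner/density conditions for the $D$-closure) than the paper does, but the underlying argument is identical.
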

\begin{proof}
Let $W_{m,m}$ be the weaving tangles from Definition \ref{def:WeavingTangle} and depicted in Figure \ref{fig:WeavingTangle}. Then the sequence $\{D(W_{m,m})\}_{m = 1}^\infty$ satisfies the conditions of Theorem \ref{thm:CKPthm2}, hence is diagrammatically maximal.
\end{proof}

Having now established examples of diagrammatically minimal and diagrammatically maximal sequences of links, we can now prove the following theorem, which is identical to Theorem \ref{thm:IntroSpectrumDet}.

\begin{theorem}\label{thm:spectrum2}
The spectrum of determinant densities $\emph{Spec}_{\det}$ contains $[0,v_8]$. In other words, given any $x \in [0,v_8]$ there exists a sequence $\{K_n\}_{n = 1}^\infty$ of links satisfying $\displaystyle \lim_{n \to \infty} d_{\det}(K_n) = x$. 
\end{theorem}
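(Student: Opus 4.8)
The plan is to mirror the structure of the proof of Theorem \ref{thm:spectrum} almost line for line, replacing hyperbolic volume with $2\pi \log \det$, the ``$N$'' closure with the ``$D$'' closure, belted sums with Conway sums, and the additivity of volume under belted sums (Lemma \ref{lem:AdditiveVolume}) with the multiplicativity of the determinant under Conway sums (Lemma \ref{lem:MultiplicativeDeterminant}). As before, it suffices to fix arbitrary non-negative integers $a,b$ not both zero and $\epsilon > 0$, and produce a link $K$ whose determinant density lies within $\epsilon$ of $\frac{b}{a+b}v_8$; ranging over rationals $\frac{b}{a+b}$ then gives a dense subset of $[0,v_8]$, and since $\mathrm{Spec}_{\det}$ is by definition the set of limit points, this establishes $[0,v_8] \subseteq \mathrm{Spec}_{\det}$.

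First I would invoke Lemma \ref{lem:DiagramMinimal} to select a diagrammatically minimal tangle $T_1 = P_{3,m,3}$ with $\det(D(T_1)) = 6m+9$ and $c(D(T_1)) = m+6$, and Lemma \ref{DiagramMaximal} to select a diagrammatically maximal weaving tangle $T_2 = W_{k,k}$ with $d_{\det}(D(T_2))$ within a controlled fraction of $v_8$; as in Theorem \ref{thm:spectrum} I would arrange that $c(D(T_1)) = c(D(T_2))$ by matching crossing numbers, which is possible since both closures are reduced alternating and crossing number grows without bound. Then I would form $K = D(T_1 + \cdots + T_1 + T_2 + \cdots + T_2)$, the ``$D$''-closure of the Conway sum of $a$ copies of $T_1$ with $b$ copies of $T_2$ (no factor of $n$ is needed here, since unlike the volume case there is no $\xi_n$ correction to drive to $1$). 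By Lemma \ref{lem:MultiplicativeDeterminant},
\begin{equation*}
\log \det(K) = a \log \det(D(T_1)) + b \log \det(D(T_2)),
\end{equation*}
and by Corollary \ref{cor:additivity}, $c(K) = a\, c(D(T_1)) + b\, c(D(T_2))$. Because $c(D(T_1)) = c(D(T_2))$, the determinant density splits cleanly:
\begin{equation*}
d_{\det}(K) = \frac{a}{a+b}\, d_{\det}(D(T_1)) + \frac{b}{a+b}\, d_{\det}(D(T_2)),
\end{equation*}
a weighted average of a term near $0$ and a term near $v_8$. Choosing $m$ large forces the minimal term below $\epsilon/2$ after weighting, and choosing $k$ large forces the maximal term within $\epsilon/2$ of $\frac{b}{a+b}v_8$, completing the two-sided estimate.

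The main conceptual point, and the reason this proof is genuinely cleaner than Theorem \ref{thm:spectrum}, is that $\log \det$ is \emph{exactly} additive under Conway sum via Lemma \ref{lem:MultiplicativeDeterminant}, whereas volume was only additive for belted sums and had to be transferred to the honest link through the inequality (\ref{eqn:11.524}) with its $\xi_n$ factor. Consequently the only real obstacle is bookkeeping: I must verify that the Conway sum $T_1 + \cdots + T_2$ is adequate so that Corollary \ref{cor:additivity} applies and $c(K)$ is genuinely additive. This follows from Proposition \ref{prop:adequate}, since each $P_{3,m,3}$ and each $W_{k,k}$ is strongly alternating, hence adequate, for the relevant parameter ranges. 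With additivity of crossing number and multiplicativity of determinant both in hand, the weighted-average identity is immediate and the remaining estimates are routine, so no delicate geometric input beyond the cited lemmas is required.
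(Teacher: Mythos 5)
Your proposal is correct and follows essentially the same route as the paper's own proof: choose $T_1 = P_{3,m,3}$ and $T_2 = W_{k,k}$ with matched crossing numbers, take the $D$-closure of the Conway sum of $a$ copies of $T_1$ with $b$ copies of $T_2$, and combine Lemma \ref{lem:MultiplicativeDeterminant} with Corollary \ref{cor:additivity} to express $d_{\det}(K)$ as the weighted average $\frac{a}{a+b}d_{\det}(K_1) + \frac{b}{a+b}d_{\det}(K_2)$. Your observation that no $\xi_n$ correction (and hence no factor of $n$) is needed is also exactly how the paper proceeds.
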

\begin{proof}
It suffices to show that for any non-negative integers $a$ and $b$ with $a$ and $b$ not both zero, and any $\epsilon > 0$, there exists a link $K$ such that
$$\frac{b}{a+b} v_8 - \epsilon < d_{\det}(K) < \frac{b}{a+b}v_8 + \epsilon.$$
Let $\epsilon > 0$ be arbitrary. Let $P_{\ell,m,n}$ be the pretzel tangles. Let $\widetilde{T}_1 = P_{3,\widetilde{m},3}$ and $\widetilde{K}_1 = D(\widetilde{T}_1)$, where $\widetilde{m}$ is chosen large enough that 
\begin{equation}\label{eqn:mdiagram}
\frac{2\pi \log(6m + 9)}{m+6} < \left(\frac{a+b}{a}\right) \frac{\epsilon}{2}
\end{equation}
Since the sequence $\{D(W_{k,k})\}_{k = 1}^\infty$ is diagrammatically maximal, we may choose $T_2 = W_{k,k}$ and $K_2 = D(T_2)$ with $k$ sufficiently large that
\begin{equation}\label{eqn:maximaldiagram}
\left| \frac{2 \pi \log \det(K_2)}{c(K_2)} - v_8 \right|< \left(\frac{a + b}{b}\right) \frac{\epsilon}{2} \quad \text{ and } \quad c(K_2) \geq c(\widetilde{K}_1)
\end{equation}
The choice of $K_2$ ensures that the crossing numbers $c(\widetilde{K}_1)$ and $c(K_2)$ satisfy the inequality
\begin{equation}
k(k-1) = c(K_2) \geq c(\widetilde{K}_1) = \widetilde{m} + 6
\end{equation}
Let $m = k(k-1) - 6$. Define $T_1 = P_{3,m,3}$ and $K_1 = D(T_1)$. This choice of $m$ implies that $$c(K_1) = c(K_2)$$ 
Using the fact that $m \geq \widetilde{m}$ we see that
\begin{equation}\label{eqn:minimaldiagram}
d_{\det}(K_1) = \frac{2\pi \log(6m + 9)}{m+6} \leq \frac{2\pi \log(6\widetilde{m} + 9)}{\widetilde{m}+6} < \left(\frac{a+b}{a}\right) \frac{\epsilon}{2}
\end{equation}
Let $T$ be the tangle sum of $a$ copies of $T_1$ with $b$ copies of $T_2$, and let $K = D(T)$ (see Figure \ref{fig:DeterminantSpectrum}).
\begin{figure}
\centering
\def\svgwidth{450pt}
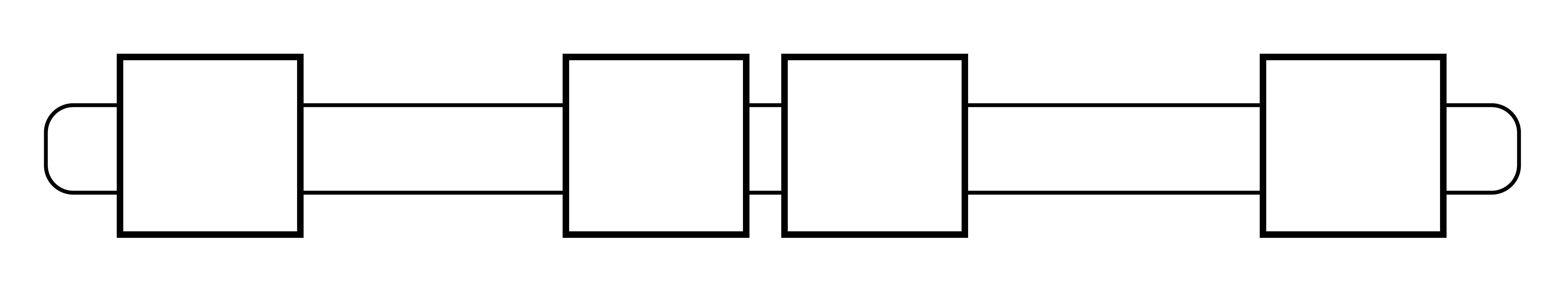
\caption{The link $K$ is the closure of the tangle sum of $a$ copies of $T_1$ with $b$ copies of $T_2$ shown above.}
\label{fig:DeterminantSpectrum}
\end{figure}
Then by Lemma \ref{lem:MultiplicativeDeterminant} we have that $\det(K) = (\det K_1)^a(\det K_2)^b$. Since $T_1$ and $T_2$ are strongly alternating tangles, we know from Corollary \ref{cor:additivity} that $c(K) = a \cdot c(K_1) + b \cdot c(K_2)$. Therefore, we obtain the following equalities:
\begin{align}
\notag d_{\det}(K) & = \frac{2 \pi \log \det(K)}{c(K)}\\
\notag & = \frac{2 \pi \log[(\det K_1)^a(\det K_2)^b]}{a \cdot c(K_1) + b \cdot c(K_2)}\\
\label{eqn:simplerdet} & = \frac{a}{a+b} \frac{2 \pi \log \det (K_1)}{c(K_1)} + \frac{b}{a + b} \frac{ 2 \pi \log \det(K_2)}{c(K_2)} && \text{since } c(K_1) = c(K_2)
\end{align}
This enables us to find the following upper bound:
\begin{align*}
d_{\det}(K) & =  \frac{a}{a+b} \frac{2 \pi \log \det (K_1)}{c(K_1)} + \frac{b}{a + b} \frac{ 2 \pi \log \det(K_2)}{c(K_2)}\\
& < \frac{a}{a+b}\left(\frac{a+b}{a}\right) \frac{\epsilon}{2}  + \frac{b}{a+b} \left[ v_8 + \left(\frac{a+b}{b}\right) \frac{\epsilon}{2} \right] && \text{by (\ref{eqn:maximaldiagram}) and (\ref{eqn:minimaldiagram})} \\
& = \frac{b}{a+b} v_8  + \epsilon
\end{align*}
Now we obtain a lower bound on $d_{\det(K)}:$
\begin{align*}
d_{\det}(K) & = \frac{a}{a+b} \frac{2 \pi \log \det (K_1)}{c(K_1)} + \frac{b}{a + b} \frac{ 2 \pi \log \det(K_2)}{c(K_2)} && \text{by (\ref{eqn:simplerdet})}\\
& \geq \frac{b}{a + b} \frac{ 2 \pi \log \det(K_2)}{c(K_2)}\\	
& > \frac{b}{a + b}\left( v_8 - \left(\frac{a+b}{b}\right) \frac{\epsilon}{2} \right) && \text{ by (\ref{eqn:maximaldiagram})}\\
& > \frac{b}{a + b} v_8 - \epsilon
\end{align*}
\end{proof}

\newpage

\bibliographystyle{abbrv}

\bibliography{Spectra}

\end{document}